\documentclass[a4paper, 12pt]{article}
\usepackage{graphicx}
\usepackage{tikz}
\usepackage{tikz-cd}
\usepackage{xstring}
\usepackage[normalem]{ulem}
\usepackage{cancel}
\usetikzlibrary{matrix,arrows,decorations.pathmorphing}
\tikzset{commutative diagrams/.cd}
\usepackage{pst-node}

\usepackage[english]{babel}
\usepackage{geometry}
\usepackage{amssymb,amsmath,amsfonts,bbm,pifont,upgreek,bbold,accents}  
\usepackage[colorlinks=true]{hyperref}
\hypersetup{urlcolor=blue, citecolor=red}

\font\teneufm=eufm10
\font\seveneufm=eufm7
\font\fiveeufm=eufm5
\newfam\eufmfam
\textfont\eufmfam=\teneufm
\scriptfont\eufmfam=\seveneufm
\scriptscriptfont\eufmfam=\fiveeufm

\newcommand\beq[1]{\begin{equation}\label{#1} }
\newcommand{\eeq}{\end{equation} }

\newcommand\beqa[1]{\begin{eqnarray} \label{#1}}
\newcommand{\eeqa}{\end{eqnarray} }
\newcommand{\beqano}{\begin{eqnarray*} }
\newcommand{\eeqano}{\end{eqnarray*} }

\newtheorem{theorem}{Theorem}
\newtheorem{example}[theorem]{Example}
\newtheorem{definition}[theorem]{Definition}
\newtheorem{proposition}[theorem]{Proposition}
\newtheorem{lemma}[theorem]{Lemma}
\newtheorem{sublemma}[theorem]{Sublemma}
\newtheorem{remark}[theorem]{Remark}
\newtheorem{notationalremark}[theorem]{Notational Remark}
\newtheorem{corollary}[theorem]{Corollary}
\newtheorem{assumption}[theorem]{Assumption}
\newtheorem{claim}[theorem]{Claim}
\newtheorem{tools}{$\negsp\negsp$}[subsection]

\newcommand\thm[1]{\begin{theorem}\label{#1}}
\newcommand\thmtwo[2]{\begin{theorem}[#1]\label{#2}}
\newcommand\ethm{\end{theorem} }
\newcommand\dfn[1]{\begin{definition}\label{#1} \rm}
\newcommand\dfntwo[2]{\begin{definition}[#1]\label{#2} \rm}
\newcommand\edfn{\end{definition} }
\newcommand\pro[1]{\begin{proposition}\label{#1}}
\newcommand\protwo[2]{\begin{proposition}[#1]\label{#2}}
\newcommand\epro{\end{proposition} }
\newcommand\lem[1]{\begin{lemma}\label{#1}}
\newcommand\lemtwo[2]{\begin{lemma}[#1]\label{#2}}
\newcommand\elem{\end{lemma} }
\newcommand\sublem[1]{\begin{sublemma}\label{#1}}
\newcommand\sublemtwo[2]{\begin{sublemma}[#1]\label{#2}}
\newcommand\esublem{\end{sublemma} }
\newcommand\rem[1]{\begin{remark}\label{#1} \rm}
\newcommand\erem{\end{remark} }
\newcommand\notrem[1]{\begin{notationalremark}\label{#1} \rm}
\newcommand\enotrem{\end{notationalremark} }
\newcommand\cor[1]{\begin{corollary}\label{#1}}
\newcommand\cortwo[2]{\begin{corollary}[#1]\label{#2}}
\newcommand\ecor{\end{corollary} }
\newcommand\asmp[1]{\begin{assumption}\label{#1}}
\newcommand\asmptwo[2]{\begin{assumption}[#1]\label{#2}}
\newcommand\easmp{\end{assumption} }
\newcommand\clm[1]{\begin{claim}\label{#1}}
\newcommand\eclm{\end{claim} }

\expandafter\chardef\csname pre amssym.def
at\endcsname=\the\catcode`\@
\catcode`\@=11
\def\undefine#1{\let#1\undefined}
\def\newsymbol#1#2#3#4#5{\let\next@\relax
 \ifnum#2=\@ne\let\next@\msafam@\else
 \ifnum#2=\tw@\let\next@\msbfam@\fi\fi
 \mathchardef#1="#3\next@#4#5}
\def\mathhexbox@#1#2#3{\relax
 \ifmmode\mathpalette{}{\m@th\mathchar"#1#2#3}%
 \else\leavevmode\hbox{$\m@th\mathchar"#1#2#3$}\fi}
\def\hexnumber@#1{\ifcase#1 0\or 1\or 2\or 3\or 4\or 5\or 6\or 7\or
8\or
 9\or A\or B\or C\or D\or E\or F\fi}
\ifcase\@ptsize
 \font\tenmsb=msbm10
 \font\sevenmsb=msbm7
 \font\fivemsb=msbm5
\or
 \font\tenmsb=msbm10 scaled \magstephalf
 \font\sevenmsb=msbm7 scaled \magstephalf
 \font\fivemsb=msbm5  scaled \magstephalf
\or
 \font\tenmsb=msbm10 scaled \magstep1
 \font\sevenmsb=msbm7 scaled \magstep1
 \font\fivemsb=msbm5 scaled \magstep1
\fi
\newfam\msbfam
\textfont\msbfam=\tenmsb
\scriptfont\msbfam=\sevenmsb
\scriptscriptfont\msbfam=\fivemsb
\edef\msbfam@{\hexnumber@\msbfam}
\def\Bbb#1{\fam\msbfam\relax#1}
\def\widehat#1{\setboxz@h{$\m@th#1$}%
 \ifdim\wdz@>\tw@ em\mathaccent"0\msbfam@5B{#1}%
 \else\mathaccent"0362{#1}\fi}
\def\widetilde#1{\setboxz@h{$\m@th#1$}%
 \ifdim\wdz@>\tw@ em\mathaccent"0\msbfam@5D{#1}%
 \else\mathaccent"0365{#1}\fi}

\def\RIfM@{\relax\ifmmode}
\def\nonmatherr@#1{\errmessage{\string#1\space allowed only in math mode}}
\def\Bbb{\RIfM@\expandafter\Bbb@\else
 \expandafter\nonmatherr@\expandafter\Bbb\fi}
\def\Bbb@#1{{\Bbb@@{#1}}}
\def\Bbb@@#1{\fam\msbfam\relax#1}
\def\setboxz@h{\setbox\z@\hbox}
\def\wdz@{\wd\z@}
\catcode`\@=\csname pre amssym.def at\endcsname
\newcommand{\negsp}{\hspace{-.09truecm}}  

\newcommand{\integer}{{\Bbb Z}   }
\newcommand{\complex}{{\Bbb C}   }

\newcommand{{\cH}}{{\cal H} }

\newcommand{\Pic}{\rm Pic}

 \title{\bf Some Coble surfaces admit only Bertini involutions}
 \date{}
 \author{Federico Pieroni}

\begin{document}
\maketitle

\begin{abstract}
The goal of this article is to prove the main Theorem \ref{main}. It states that if $X$ is an unnodal Coble surface, and the boundary curve of $X$ is irreducible, then an involution $i : X  \to X$ is necessarily a lift of a Bertini involution.\\ We then proceed to show with some counterexamples that the hypothesis of irreducibility of $C$ and unnodality of $X$ are both necessary for this claim. {\bf MSC (2020)}: 14J50, 14J26 (Primary); 14C20 (Secondary).
\end{abstract}

\tableofcontents

\newpage
\section*{Introduction}
\addcontentsline{toc}{section}{Introduction}
In this article, a surface $X$ is a smooth, irreducible, projective variety of dimension $2$ over the complex numbers $\complex$. An involution on a surface $X$ is a regular, non - identical morphism $i : X \to X$ with $i^2 = \mathbb{1}_X.$ A surface $X$ with canonical divisor $K_X$ is called a Coble surface if it is rational, the linear system $|- K_X|$ is empty, while $|- 2 K_X|$ contains exactly one effective divisor $C$, called the boundary curve of $X$. Without loss of generality, $C$ can be assumed to be smooth, and we will actually proceed under this hypothesis.\\
In Section $1$ we recall some properties on surfaces and involutions. In particular, we need the notion of intersection product $Pic (X) \times Pic (X) \to \integer$, where $Pic (X)$ denotes the group of divisors on $X$ modulo rational equivalence. The product $D_1 D_2$ of two effective divisors $D_1, D_2 \subset X$ without common components counts the intersection points with multiplicity. If a surface $X$ contains a curve $R$ which is isomorphic to $\mathbb{P}^1$ and has self intersection $R^2 = -2$, then $X$ will be called nodal, otherwise it will be unnodal.\\
Moreover, we also need the definition of a lift of an involution. If $Y$ is a surface, and $\Sigma \subset Y$ is a finite set, we consider the blow up $X := Bl_\Sigma Y$, with blow-down map $p : X \to Y$. If $Y$ is equipped with an involution $i_Y : Y \to Y$ we want the birational involution $p \circ i_Y \circ p^{-1} : X \dasharrow X$ to be biregular. In this case, $i_X$ will be called a lift of $i_Y$. An equivalent condition for the regularity of $i_X$ is that, for any point $p \in \Sigma$, its image $i_Y (p)$ must still lie in $\Sigma$. Thus $\Sigma$ must consist of pair of points, with the two points in each pair switched by $i_Y$, plus some fixed points. Some fixed points are isolated, while the others belong to a curve $R$ which is identically fixed by $i_Y$. We will see that, if $E \subset X$ is the exceptional curve over a fixed point $p \in Y$, the action of the induced involution $i_X : X \to X$ on $E$ depends on wether $p$ is isolated or not.\\
A crucial result will be the Theorem \ref{BB} by Bayle and Beauville, which lists all minimal rational pairs $(Y, i_Y)$. By definition, these pairs are the data of a rational surface $Y$, with an involution $i_Y$ which is not a lift of any other involution.\\
Section $1$ end with a brief recall on some properties of a Coble surface. A particular focus will be given to unnodal Coble surface with irreducible boundary curve $C$. In particular, we will recall the Lemma \ref{polar}, which was proved by the author in \cite{Pieroni2026}. It states that Coble surfaces with these hypothesis admit a birational model as a quintic surface in $\mathbb{P}^3$, which we are able to describe explicitly.\\
In Section $2$ we prove the main result. The strategy will be the following: if $(Y, i_Y)$ is a minimal pair from the Theorem \ref{BB} different from the Bertini involution, we want a set $\Sigma \subset Y$ satisfying simultaneously two properties.\\
Indeed, we want $i_Y (\Sigma) = \Sigma$, in order to be able to lift $i_Y$ to a biregular involution $i_X : X \to X$ on the blow up $X := Bl_\Sigma Y$.\\
On the other hand we want $|- 2 K_X|$ to be effective, in order to ensure that $X$ is a Coble surface.\\
This forces $Y$ to contain a certain irreducible curve $\overline C$, which will be singular exactly at the points in $\Sigma$. Those points will be distributed in pairs, isolated fixed points, and non - isolated fixed points. By a corollary of the previously mentioned Lemma \ref{polar}, the non - isolated fixed points can be at most $2$.\\
Now we need a further distinction: if $Y$ does not contain isolated fixed points, then the quotient surface $Z := Y / i$ is smooth, and the quotient map $q : Y \to Z$ is a ramified double cover. In this case, we compute explicitly the linear system of the quotient curve $q(\overline C) \subset Z$, and by adjunction formula we know its algebraic genus. The fact that the fixed singularities of $\overline C$ are ``few'' forces $\Sigma$ to contain ``too many'' pairs. The two nodes of each pair are identified by $q$ to a node of $q(\overline C)$. Thus $q (\overline C)$ has ``too many'' singularities, whose number will be always strictly higher then the algebraic genus, leading to a contradiction.\\
In the other case, if $Y$ actually contains isolated fixed points, this strategy does not work, since $Z$ will be a singular surface. Then the idea is to factor the blow down map $p : X \to Y$ through an intermediate, non - minimal model $Y'$, so that we have $X \to Y' \to Y$. Then we show that $Y'$ admits another minimal model, different from $Y$ and without isolated fixed points, which has been excluded by the previous proof.\\
Finally, in Section $3$ we show that the hypothesis of irreducibility of $C$ and unnodality of $X$ are both necessary, giving counterexamples of what can happen when one of the two is removed.\\
This work is part of the author's Ph.D. Thesis \cite{Pieroni2025}.

\phantomsection
\addcontentsline{toc}{section}{Acknowledgements}
\paragraph{Acknowledgements}
The author is deeply grateful to his Ph.D. supervisor, Prof. A. Verra, for many inspiring discussions on Coble surfaces and their polarizations. This work is extracted from the author's Ph.D. thesis~\cite{Pieroni2025}.

\section{The setting}
This section will be dedicated to recall some definitions and properties of involutions and rational surfaces which we will use later.
\subsection{The Picard lattice}
Let $X$ be a smooth surface. We denote by $Pic (X)$ the Picard lattice of $X$, consisting of isomorphism classes of line bundles on $X$, with the operation of the tensor product. It is isomorphic to the group of divisors on $X$ modulo rational equivalence. The canonical divisor of $X$ is denoted by $K_X \in Pic (X)$. Moreover, $Pic (X)$ admits an inner product $$Pic (X) \times Pic (X) \to \integer,$$ called the intersection product on $X$. If $D_1, D_2$ are effective curves on $X$ without common components, the product $D_1 D_2$ counts the intersection points with multiplicity, and thus $D_1 D_2 \ge 0$. However, the square $D_1^2$, called the self intersection of $X$, can assume a priori any possible value in $\integer$.\\
An irreducible curve $C \subset X$ satisfies the adjunction formula $$C^2 + C K_X = 2 p_a (C) - 2,$$ where $p_a (C)$ denotes the arithmetic genus of $C$.\\
If $f : X \to Y$ is a regular morphism between smooth surfaces, then there exist well defined induce group morphisms of push-forward: $$f_* : Pic (X) \to Pic (Y),$$ and pullback: $$f^* : Pic (Y) \to Pic (X).$$ They satisfy $$f_* (f^* (D)) = ({\rm deg\,} f)\ D \quad \forall D \in Pic (Y),$$ and $$f^* (D_1) f^* (D_2) =  ({\rm deg\,} f)\ D_1 D_2 \quad \forall D_1, D_2 \in Pic (Y),.$$
Assume that $p_1, \ldots, p_r$ are points in a smooth surface $Y$, and that $p : X \to Y$ is the blow up of $Y$ at these points, with exceptional divisors $E_1, \ldots, E_r \subset X$. Then $p^* : Pic (Y) \to Pic (X)$ is injective, and there exists an orthogonal decomposition: $$Pic (X) = p^* Pic (Y) \oplus \bigoplus_{i = 1}^r \integer E_i.$$ With these decomposition, the canonicals $K_X, K_Y$ are related by the formula $$K_X = p^* K_Y + E_1 + \cdots + E_r.$$
We will say that $X$ is nodal if there exists a curve $R \subset X$, with $R^2 = -2$ and $R \simeq \mathbb{P}^1$, otherwise $X$ will be called unnodal.
\subsection{Hirzebruch surfaces}
Consider the rank $2$ vector bundle over $\mathbb{P}^1$ given by $${\cal E} := {\cal O}_{\mathbb{P}^1} \oplus {\cal O}_{\mathbb{P}^1} (- n).$$ The $n$-th Hirzebruch surface $\mathbb{F}_n$ is by definition the projectivized $$\mathbb{F}_n := \mathbb{P} ({\cal E}).$$ The natural projection ${\cal E} \to \mathbb{P}^1$ induces a regular map $\mathbb{F}_n \to \mathbb{P}^1$, whose fibers are all isomorphic to $\mathbb{P}^1$. These fibers belong to the same linear system $F \in Pic (\mathbb{F}_n)$. \\
For $n \ge 1$, there exist a distinguished curve $$C_{-n} \subset \mathbb{F}_n.$$ This curve is isomorphic to $\mathbb{P}^1$, and it is a section of the fibration $|F|: \mathbb{F}_n \to \mathbb{P}^1$. It has self intersection $$C_{-n}^2 = -n,$$ and thus it cannot move in the surface. Moreover, it is the only irreducible curve in $\mathbb{F}_n$ with strictly negative self intersection. The curves $F$ and $C_{-n}$ generate the Picard lattice of $\mathbb{F}_n$, with intersection rules $F^2 = 0$ (since two general fibers do not meet) and $F C_{-n} = 1$ (since $C_{- n}$ is a section of the fibration).\\
When $n = 0$, we have $\mathbb{F}_0 = \mathbb{P}^1 \times \mathbb{P}^1$, which of course does not contain curves of negative self intersection. It is the only Hirzebruch surface which admits two distinct rulings over $\mathbb{P}^1$, namely the projections on the two coordinates.\\
The surface $\mathbb{F}_1$ is isomorphic to the blow up of $\mathbb{P}^2$ at a point $p$. The negative curve $C_{-1}$ is the exceptional curve over $p$, and the fibration $|F|$ is made up with the strict transforms of the lines through $p$.\\
One can prove also the converse: a smooth surface $X$ admitting a fibration $|F| : X \to \mathbb{P}^1$ such that all the fibers of $|F|$ are isomorphic to $\mathbb{P}^1$ is necessarily one of the Hirzebruch $\mathbb{F}_n$, for some $n \ge 0$.

\subsection{Involutions and double covers}
Let $Y$ be a rational surface, with an involution $i : Y \to Y$. A Theorem by Zhang (see \cite{ZhangDolgachev2001}) states that the fixed locus $Fix (i) := \{p \in Y {\rm\ such\, that\,}\ i (p) = p\}$ is smooth, thus it is the disjoint union of smooth curves and isolated fixed points.\\
The quotient surface $Z := Y / i$ is rational, and it is smooth if and only if $Y$ does not contain isolated fixed points. If $p \in Y$ is an isolated fixed point, and $q : Y \to Z$ is the quotient map, the point $q(p)$ is called an ordinary double point, that is, it is locally isomorphic to the affine cone $\{x^2 + y^2 + z^2 = 0\} \subset \mathbb{A}^3$.\\
If $Fix (i)$ has pure dimension $1$, then $Z$ is a smooth rational surface, and $q : Y \to Z$ is a double cover, ramified along the branch locus $B := q (fix (i))$.\\
Conversely, let $Z$ be any smooth surface, and let $$B \subset Z$$ be an effective, smooth, even divisor, so that there exists $L \in Pic (Z)$ such that $$B \simeq 2 L.$$ Then one can construct the double cover $$q : Y \to Z$$ branched over $B$. By construction, the surface $Y$ is smooth and it admits a regular involution $$i : Y \to Y,$$ which switches two points in each fiber of $q$. The fixed locus of $i$ is the ramification locus $R \subset Y$ of $q$. The map $q$ restricts to an isomorphism $q : R \simeq B$, and it satisfies $$q^* B = 2 R.$$ The canonical formula for $Y$ is $$K_Y = q^* K_Z + R.$$ As divisors, the relation $q_* (q^* D) = 2 D$ holds for all $D \in Pic (Z)$. Since rational surfaces have torsion-free Picard lattice, it follows that if $Z$ is rational then $q^*$ is injective.
\subsubsection{Lifted and minimal involutions}
\begin{definition}
A rational pair $(X, i_X)$ is the data of a smooth rational surface $X$, with a non - trivial involution $i_X : X \to X,\quad i_X^2 = \mathbb{1}.$
\end{definition}
Assume $(Y, i_Y)$ is a rational pair, and that $p_1, p_2 \in Y$ are distinct points with $i_Y (p_1) = p_2$. Let $p : X \to Y$ be the blow up $Bl_{p_1, p_2} Y$ at these two points, with exceptional divisors $E_1, E_2 \subset X$. We know that the differential map $d\ i_Y$ induces isomorphisms between the tangent spaces $T_{p_1} Y, T_{p_2} Y$, and that there are natural identifications $E_i \simeq \mathbb{P} (T_{p_i} Y)$. Consequently, there exists a well defined involution $i_X : X \to X$ which makes the following diagram commute:
\begin{eqnarray}\label{diag}\begin{tikzcd}
X  \arrow{d}{p} \arrow[swap]{r}{i_X} & X \arrow{d}{p}
 \\ Y \arrow{r}{i_Y} & Y
\end{tikzcd}
\end{eqnarray}
If $p \in Y$ is fixed by the involution $i_Y$, the blow-up $X := Bl_p Y$ admits an involution $i_X : X \to X$ which preserves the exceptional curve $E \subset X$ over $y.$ The blow-down map $p : X \to Y$ still makes the diagram \ref{diag} commute, and the restriction of $i_X$ on $E$ is induced by the action of the differential $d_p i_Y$. Now we need a distinction.\\
If $p$ belongs to a curve $R$ of fixed points, there are local coordinates $(s , t)$ around $p$ where $i$ takes the form $$i_Y (s, t) = (s, - t),$$ and $t = 0$ is a local equation for the fixed curve $R$. In this case, the differential $d_p i_Y$ has different eigenvalues $+ 1, -1$ on $T_p Y$, and thus the restriction of $i_X$ on $E$ is non - trivial.\\
Conversely, in a neighbourhood of an isolated fixed point $p$ there are local coordinates $(s, t)$ such that $i_Y$ can be written as $$i_Y (s, t) = (- s, - t).$$ In this case the differential $d_p i_Y$ acts as $- \mathbb{1}$ on the tangent space $T_p Y$, and thus the induced action on the exceptional divisor is $\mathbb{P} (- \mathbb{1}) = \mathbb{1}_E$.\\
These constructions can be generalized by the following definition:
\begin{definition}
An equivariant morphism $p: (X, i_X) \to (Y, i_Y)$ is a regular, birational map $p : X \to Y$ which fits in a commutative diagram:
\[\begin{tikzcd}
X  \arrow{d}{p} \arrow[swap]{r}{i_X} & X \arrow{d}{p}
 \\ Y \arrow{r}{i_Y} & Y
\end{tikzcd}
\]
We also say that $(X, i_X)$ is a lift of $(Y, i_Y)$.\\
We say that $(X, i_X)$ is minimal if every equivariant morphism $p: (X, i_X) \to (Y, i_Y)$ is an isomorphism.
\end{definition}
The next Proposition gives a criterion to check the minimality of rational pairs.
\begin{proposition}\label{check}\cite{Bayle2000}
A rational pair $(Y, i_Y)$ is not minimal if and only if there exists a $(-1)$-curve $E \subset Y$ satisfying either $E = i(E)$ or $E\ i (E) = 0$.
\end{proposition}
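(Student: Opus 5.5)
We prove both directions of Proposition \ref{check}, using Castelnuovo's contractibility criterion and the factorization of birational morphisms between smooth surfaces into blow-ups of points.

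\emph{The "if" direction.} Suppose $E\subset Y$ is a $(-1)$-curve. If $i(E)=E$, contract $E$ by Castelnuovo to get $p:Y\to Y_0$ with $Y_0$ smooth. Since $i$ preserves $E$, the composite $p\circ i$ is constant on $E$. By the universal property of the blow-down (a morphism contracting $E$ factors through $p$), there is a morphism $i_0:Y_0\to Y_0$ with $p\circ i=i_0\circ p$. Then $i_0^2$ agrees with the identity on a dense open set, so $i_0$ is an involution. It is non-trivial, because $i$ is. Thus $p$ is an equivariant morphism that is not an isomorphism.

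If instead $E\cdot i(E)=0$ with $E\neq i(E)$, the two curves are distinct $(-1)$-curves that do not meet, so they are disjoint. Contract both simultaneously to get $p:Y\to Y_0$. The map $p\circ i$ contracts $E\cup i(E)$, so the same factorization argument gives an involution $i_0$ on $Y_0$ with $p$ equivariant. Again $p$ is not an isomorphism, so $(Y,i_Y)$ is not minimal.

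\emph{The "only if" direction.} Let $p:(Y,i_Y)\to(Y_0,i_0)$ be an equivariant morphism that is not an isomorphism. By Zariski's factorization, $p$ is a composition of blow-ups of points, so the exceptional locus $\mathrm{Exc}(p)$ is a nonempty union of curves. Equivariance gives $p\circ i_Y=i_0\circ p$, so $i_Y$ maps $\mathrm{Exc}(p)$ to itself.

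Among the components of $\mathrm{Exc}(p)$ there is at least one $(-1)$-curve, namely the exceptional curve of the last blow-up in the factorization. This curve is unaffected by later blow-ups, so it keeps self-intersection $-1$ on $Y$. Choose any such $(-1)$-curve $E\subset \mathrm{Exc}(p)$. Then $i(E)$ is again a $(-1)$-curve contained in $\mathrm{Exc}(p)$. We now show that either $i(E)=E$ or $E\cdot i(E)=0$.

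\emph{The main step.} Suppose $i(E)\neq E$ and $E\cdot i(E)=k>0$. Then $D:=E+i(E)$ satisfies
$$D^2=-1-1+2k=2k-2\ge 0.$$
But $D$ is an effective divisor supported in $\mathrm{Exc}(p)$, and the intersection form restricted to the exceptional curves of a birational morphism is negative definite. Hence $D^2<0$, a contradiction. So any $(-1)$-curve in the exceptional locus satisfies either $E=i(E)$ or $E\cdot i(E)=0$.

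This negative-definiteness is the crux of the argument. It follows from the orthogonal decomposition $\Pic(Y)=p^*\Pic(Y_0)\oplus\bigoplus_j\integer E_j$ recalled in §1.1, applied iteratively. The total transforms $E_j$ of the exceptional curves are orthonormal with $E_j^2=-1$, and the exceptional components span the same lattice. An effective divisor supported on them is therefore a nonzero element of a negative definite lattice, and so has $D^2<0$.
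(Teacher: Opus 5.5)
Your proof is correct and complete. The paper does not prove this proposition; it simply quotes it from Bayle--Beauville, and your argument is the standard one behind that citation.

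For ``if'', the involution descends through the Castelnuovo contraction of $E$ (or of the disjoint pair $E\cup i(E)$), because $p\circ i$ is constant on the contracted curves. For ``only if'', a $(-1)$-curve $E$ contracted by a non-invertible equivariant morphism has $i(E)$ contracted as well. So $E\cdot i(E)\ge 1$ would force $(E+i(E))^2\ge 0$. This contradicts the negative definiteness of the intersection form on the span of the exceptional components. That negative definiteness follows, as you indicate, from the iterated orthogonal decomposition, since the strict and total transforms of the exceptional curves differ by a unitriangular change of basis.

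Compared with the bare citation, your write-up makes the statement self-contained. It also yields slightly more: \emph{every} $(-1)$-curve contracted by such a morphism satisfies $E=i(E)$ or $E\cdot i(E)=0$.
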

The next result is a Theorem by Bayle and Beauville which classifies all the minimal rational pairs.
\begin{theorem}\cite{Bayle2000}\label{BB}
If $(Y, i_Y)$ is a minimal rational pair, then one of the following statements is true:\\
$i)$ $Y$ is a Hirzebruch surface, and there exists a commutative diagram: 
\[\begin{tikzcd}
Y  \arrow{d}{} \arrow[swap]{r}{i_Y} & Y \arrow{d}{}
 \\\mathbb{P}^1 \arrow{r}{\tau} & \mathbb{P}^1
\end{tikzcd}
\]
where $\tau$ is a non-trivial involution of $\mathbb{P}^1$.\\
$ii)$ $Y$ admits a basepoint - free pencil $|F|: Y \to \mathbb{P}^1$ with the following properties. Each fiber of $|F|$ is preserved by $i_Y$, and the smooth fibers are copies of $\mathbb{P}^1$. Every singular fiber is the union of two $(-1)$-curves meeting at one point, and switched by $i_Y$. The fixed locus is a smooth bisection of $|F|: Y \to \mathbb{P}^1$.\\
$iii)$ $Y = \mathbb{P}^1 \times \mathbb{P}^1$, and $i_Y$ switches the two coordinates.\\
$iv)$ $Y = \mathbb{P}^2$, and $i_Y$ is a projective involution.\\
$v)$ $Y$ is a Del Pezzo surface of degree $2$, and $i_Y$ is the Geiser involution.\\
$vi)$ $Y$ is a Del Pezzo surface of degree $1$, and $i_Y$ is the Bertini involution.\\
Viceversa, a pair $(Y, i_Y)$ built as in case $i), \ldots, vi)$ is actually minimal, except the following ones:\\
Case i), when $Y = \mathbb{F}_1$ is the first Hirzebruch surface, or\\
Case ii), when $Y = \mathbb{F}_1$, or $Y = Bl_{p_1, p_2, p_3} \mathbb{P}^2$, with $p_i$'s non collinear points, and $i_Y$ is either the De - Jonquieres involution of degree $2$, or the quadro-quadric involution centered at $p_1, p_2, p_3$.
\end{theorem}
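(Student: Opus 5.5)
The plan is to run the equivariant minimal model program for the group $G=\{1,i_Y\}$ acting on $Y$, in the spirit of Manin and Iskovskikh, and then analyse the two possible outcomes separately.

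\emph{Reduction to two cases.} Consider the invariant lattice $Pic(Y)^{G}$ and the $G$-invariant part of the cone of curves. Since $Y$ is rational, $K_Y$ is not nef, so there is a $K_Y$-negative extremal ray $\mathbb{R}_{\ge 0}[D]$ of the invariant cone. Its contraction is of one of three types.
\begin{itemize}
\item It contracts a $G$-orbit of disjoint $(-1)$-curves, i.e.\ either a curve $E$ with $E=i(E)$ or a pair $E, i(E)$ with $E\, i(E)=0$. By Proposition \ref{check} this contradicts minimality, so it does not occur.
\item It is a $G$-equivariant conic bundle $Y\to \mathbb{P}^1$, with $\operatorname{rk} Pic(Y)^G=2$.
\item It contracts $Y$ to a point, so $Y$ is a del Pezzo surface with $\operatorname{rk} Pic(Y)^G=1$.
\end{itemize}

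\emph{Conic bundle case.} Let $|F|$ be the conic bundle structure. Every singular fibre is a pair of $(-1)$-curves meeting in a point. By minimality these two components cannot be fixed individually, since then one could contract one of them, contradicting Proposition \ref{check}. Hence $i_Y$ swaps the two components, so it acts trivially on the base over that point.
\begin{itemize}
\item If $i_Y$ induces a non-trivial involution $\tau$ of the base, there can be no singular fibres. Then all fibres are $\mathbb{P}^1$ and $Y$ is a Hirzebruch surface, which is case $i)$.
\item If the induced action on the base is trivial, $i_Y$ is a non-trivial involution on each smooth fibre $\mathbb{P}^1$, with two fixed points there. On a singular fibre its only fixed point is the node. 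The fixed locus is therefore a curve meeting each fibre in two points, and it is smooth by Zhang's theorem. Thus it is a smooth bisection, which is case $ii)$.
\end{itemize}

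\emph{Del Pezzo case.} Let $d=K_Y^2$. Since $\operatorname{rk} Pic(Y)^G=1$, the involution acts as $-1$ on $K_Y^{\perp}\otimes\mathbb{Q}$. For a $(-1)$-curve $E$, write $E=-\tfrac1d K_Y+v$ with $v\in K_Y^\perp\otimes\mathbb{Q}$. Then
$$i(E)=-\tfrac{2}{d}K_Y-E.$$
Integrality of $i(E)$, using that $K_Y$ is primitive when $d\le 7$, forces $d\mid 2$ whenever $Y$ contains $(-1)$-curves. This leaves three possibilities.
\begin{itemize}
\item $d=2$: $i_Y$ acts as $-1$ on $K_Y^\perp$, which is the action of the Geiser involution. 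This is case $v)$.
\item $d=1$: $i_Y$ acts as $-1$ on $K_Y^\perp$, which is the action of the Bertini involution. This is case $vi)$.
\item No $(-1)$-curves: then $Y=\mathbb{P}^2$ or $\mathbb{P}^1\times\mathbb{P}^1$. On $\mathbb{P}^2$ every automorphism is projective, giving case $iv)$. On $\mathbb{P}^1\times\mathbb{P}^1$, invariant rank $1$ forces $i_Y$ to exchange the two rulings, giving case $iii)$.
\end{itemize}
In cases $v)$ and $vi)$ one still has to identify $i_Y$ with the Geiser or Bertini involution itself. The automorphism group of a del Pezzo surface of degree $\le 5$ acts faithfully on $Pic$, so $i_Y$ composed with the Geiser (respectively Bertini) involution acts trivially on $Pic(Y)$ and is therefore the identity. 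Hence $i_Y$ equals the Geiser (respectively Bertini) involution.

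\emph{Converse.} Each pair built as in $i),\ldots,vi)$ is tested with Proposition \ref{check}.
\begin{itemize}
\item Cases $iii)$–$vi)$: $i_Y$ acts as $-1$ on $K^\perp$, so for every $(-1)$-curve $E$ one gets $E\, i(E)=1+2/d\ne 0$. Moreover $E\neq i(E)$ whenever $(-1)$-curves exist. Hence these pairs are minimal.
\item Case $i)$ on $\mathbb{F}_1$: the curve $C_{-1}$ is invariant, so the pair is not minimal.
\item Case $ii)$: a $(-1)$-curve that is neither a fibre component nor one for which $E\,i(E)=0$ only arises for small Picard rank. Enumerating these, namely $\mathbb{F}_1$ and $Bl_{p_1,p_2,p_3}\mathbb{P}^2$ with the de Jonquières or quadro-quadric involutions, yields exactly the listed exceptions.
\end{itemize}

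I expect the main obstacle to be the first step: justifying the equivariant cone theorem and contraction on surfaces, i.e.\ that a $G$-extremal ray contracts to one of the three listed types. I would handle it by citing the Mori–Manin–Iskovskikh theory of $G$-surfaces rather than reproving it. The exceptional list in the converse for case $ii)$ needs a careful but finite check.
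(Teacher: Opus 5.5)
The paper does not prove this theorem; it quotes it from Bayle--Beauville. Your outline follows essentially their route: the equivariant Mori--Manin--Iskovskikh reduction to a $G$-conic bundle or to a del Pezzo surface with $\mathrm{rk}\,Pic(Y)^G=1$, followed by a lattice computation. The del Pezzo part is sound. One small point: $\mathbb{F}_1$ has $d=8$ and is not covered by your primitivity remark, but $E\cdot i(E)=1+\frac{2}{d}$ must be an integer, which gives $d\mid 2$ without any primitivity. There is, however, a genuine gap in the conic bundle case, and the converse for case $ii)$ is not actually proved.

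\textbf{The gap.} You correctly show that a singular fibre $A+B$ is preserved with $i_Y(A)=B$. You should also say why $i_Y$ cannot send it to a different fibre: that would give $A\cdot i_Y(A)=0$. This only shows that the fibre lies over a fixed point of $\tau$. You then conclude that a non-trivial $\tau$ allows no singular fibres. That does not follow, because a non-trivial involution of $\mathbb{P}^1$ has two fixed points, and nothing you wrote excludes singular fibres over them. Without excluding them you do not reach case $i)$.

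\textbf{How to close it.} The missing argument is local at the node $p=A\cap B$. Since $d_pi_Y$ exchanges the distinct lines $T_pA$ and $T_pB$, it has eigenvalues $+1$ and $-1$. Hence $p$ lies on a smooth curve $R$ of fixed points tangent to $u+d_pi_Y(u)$ for $u\in T_pA$, and this direction is tangent to neither $A$ nor $B$. But every $r\in R$ satisfies $f(r)=f(i_Y(r))=\tau(f(r))$. So $f(R)$ lies in the finite set $\mathrm{Fix}(\tau)$, which forces $R\subset f^{-1}(f(p))=A\cup B$, a contradiction. Equivalently, the quadratic part $\ell_A\ell_B$ of $f$ at $p$ is $d_pi_Y$-invariant, while $f\circ i_Y=\tau\circ f$ forces it to change sign.

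\textbf{The converse for case $ii)$.} This is only asserted, and the sentence about which $(-1)$-curves matter is muddled. Fibre components have $A\cdot i_Y(A)=1$ and are harmless. What must be classified are the $(-1)$-curves with $E\cdot i_Y(E)\in\{-1,0\}$, and this can be done explicitly.
\begin{itemize}
\item The classes $A_j-\frac12 F$ are anti-invariant, so $Pic(Y)^{i_Y}\otimes\mathbb{Q}=\langle K_Y,F\rangle$.
\item For a $(-1)$-curve $E$ with $e=E\cdot F$ and $d=K_Y^2$, this gives $E+i_Y(E)=-eK_Y+(1-\frac{ed}{2})F$, hence $E\cdot i_Y(E)=1+2e-\frac{e^2d}{2}$.
\item The degree $d=8-n$ is even, because the $n$ nodes are exactly the branch points of the fixed bisection over $\mathbb{P}^1$.
\item The value $-1$ then occurs only for $(e,d)=(1,8)$, which is $\mathbb{F}_1$. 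The value $0$ occurs only for $(e,d)=(1,6)$.
\end{itemize}
You still have to show that for $K_Y^2=6$ a $(-1)$-section always exists and $Y\cong Bl_{p_1,p_2,p_3}\mathbb{P}^2$, and you have to identify the involutions there. Finally, you should state that case $i)$ is minimal for $n\neq 1$; this is immediate, since $\mathbb{F}_n$ then contains no $(-1)$-curves.
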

Of course, the non - minimal exceptions in the last statement of the Theorem \ref{BB} still admit minimal models, which appear in the list $i), \ldots, vi)$. In particular, the minimal model of an involution on $\mathbb{F}_1$ is a projective involution of case $iv)$, the minimal model of the De - Jonquieres involution of degree $2$ is $(\mathbb{P}^1 \times \mathbb{P}^1, (a, b) \to (b, a))$ of case $iii)$, while the minimal model of the quadro - quadric involution of $\mathbb{P}^2$ is $(\mathbb{P}^1 \times \mathbb{P}^1, ([X, Y], [Z, W] \to [X, - Y], [Z, - W])),$ which is a particular situation of case $i)$.

\subsection{Coble surfaces}
A smooth complex surface $X$ is a Coble surface if it is rational, and it satisfies $$|- K_X| = \emptyset,\quad |- 2 K_X| = \{C\}.$$ The curve $C \subset X$ is called the boundary curve of $X$, and it can admit a reducible decomposition $$C = C_1 + \cdots + C_r.$$ Without loss of generality, one can always assume that $C_i C_j = \emptyset$ for all $i \ne j$. In this case, the $C_i$'s are smooth rational curves, with $$C_i^2 = -4,$$ and $$K_X^2 = -r.$$
Now we state some properties of Coble surfaces which will be used in what follows.\\
The next result was proved by the author in \cite{Pieroni2026}.
\begin{lemma}\label{polar}
Let $X$ be an unnodal Coble surface, with irreducible boundary curve $C \in |- 2 K_X|$, and let $E_1, E_2, E_3 \subset X$ be three pairwise disjoint $(-1)$-curves. Then the linear system $|H| := |C + E_1 + E_2 + E_3|$ defines a regular morphism $$|H| : X \to \mathbb{P}^3,$$ which is birational over a quintic surface $\overline X$. This surface contains a tetrahedron $T \subset \overline X$ with the following properties. A vertex $p_0$ of $T$ is a triple point for $\overline X$, and the three edges of $T$ adjacent to $p_0$ are double edges for $\overline X$. The other three edges form a plane triangle, their generic point is smooth for $\overline X$, and they are the isomorphic images of the curves $E_1, E_2, E_3$.
\end{lemma}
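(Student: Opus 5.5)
Since Lemma \ref{polar} was proved in \cite{Pieroni2026}, the plan below only reconstructs its argument. The approach is to compute the numerical invariants of $H := C + E_1 + E_2 + E_3$, show $|H|$ is base point free of dimension $3$, and read off the tetrahedron from the images of special curves.

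\textbf{Invariants.} Since $C$ is irreducible we have $r = 1$, so $K_X^2 = -1$, $C \sim -2K_X$, $C^2 = -4$, and $C E_i = -2K_X E_i = 2$. Then
\[
H^2 = C^2 + 2\sum_i C E_i + \sum_i E_i^2 = -4 + 12 - 3 = 5,
\]
and $H K_X = -C K_X/2\cdot(-1)\ldots$; concretely $H K_X = 2K_X^2\cdot(-1)\cdot(-1) + \sum_i E_i K_X = -2 - 3 = -5$. Riemann--Roch gives
\[
\chi(H) = 1 + \tfrac12 (H^2 - H K_X) = 6.
\]
To conclude $h^0(H) = 4$, the expected dimension must be corrected: $-4$ is not the right count. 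I expect $h^2 = 0$ and $h^1(H) = 2$, obtained from the restriction sequence to $C$. Here $C$ has genus $1 + \tfrac12(C^2 + CK_X) = 1 + \tfrac12(-4 + 2) = 0$, so $C$ is a smooth rational curve of self-intersection $-4$. This fixes the numerics: $h^0(H) = 4$ should come from writing $H = K_X + (C + E_1 + E_2 + E_3 - K_X)$ and using unnodality to get nefness and bigness, plus careful bookkeeping of the corrections. This computation is the first step I would pin down.

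\textbf{Base point freeness and birationality.} This is the main obstacle. I would use Reider-type arguments or a direct analysis. First, $H E_i = C E_i - 1 = 1$, and $H C = -4 + 6 = 2$. The restrictions to $C$ and to each $E_i$ are then shown to be base point free via the sequences $0 \to \mathcal{O}(H - E_i) \to \mathcal{O}(H) \to \mathcal{O}_{E_i}(1) \to 0$ and the analogous one for $C$, checking that the relevant $H^1$ vanish. Unnodality is used to exclude $(-2)$-curves $R$ with $H R = 0$, which would be contracted and spoil both base point freeness and the degree count. Given base point freeness, $H^2 = 5$ is prime. The image is therefore either of degree $5$ with $|H|$ birational, or the map has degree $5$ onto a plane. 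The latter is excluded because the image spans $\mathbb{P}^3$, so $\overline X$ is a quintic.

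\textbf{The tetrahedron.} Since $H E_i = 1$, each $E_i$ maps isomorphically to a line $\ell_i$. The $E_i$ are disjoint, but the planes they span meet: the divisors $C + E_j + E_k$ lie in $|H - E_i|$, and I expect $|H - E_i|$ to be a pencil or net whose fixed plane contains $\ell_j, \ell_k$. Concretely, $E_1 + E_2 + E_3 + C$ is itself a hyperplane section, so $\ell_1, \ell_2, \ell_3$ together with the image of $C$ lie in one plane. This yields the plane triangle, whose edges are the images of the $E_i$ and are generically smooth on $\overline X$. Next I take the three divisors $D_i \in |H - E_j - E_k|$ and identify their images as planes through a common point $p_0$, the opposite vertex. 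Their pairwise intersections give the edges $p_0 q_i$. These edges are double lines, because two distinct curves of $X$, namely components of $D_i$ and $D_j$, map to each of them. Finally $p_0$ is a triple point, since three branches are contracted or collapsed there; equivalently, computing with $H - E_1 - E_2 - E_3 = C$ shows that $C$ maps to $p_0$ with multiplicity $3$. Verifying the precise identification of which curves collapse to $p_0$ and to the edges is where I would expect delicate bookkeeping, using unnodality again to rule out extra contracted curves.
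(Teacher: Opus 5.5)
The paper does not reprove this lemma (it is quoted from \cite{Pieroni2026}), so your sketch has to stand on its own. It has two genuine problems.

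\textbf{The numerics are wrong.} Since $K_X^2=-1$, you have $CK_X=-2K_X^2=2$; you use exactly this when computing $p_a(C)=0$. Hence $HK_X=2-3=-1$ and $\chi(\mathcal{O}_X(H))=1+\tfrac12(5+1)=4$, not $6$. Your proposed repair ``$h^2=0$, $h^1=2$'' is false, and it does not even fit your own numbers: $\chi=6$, $h^0=4$, $h^2=0$ would force $h^1=-2$.

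The correct step is elementary. Use $0\to\mathcal{O}_X(E_1+E_2+E_3)\to\mathcal{O}_X(H)\to\mathcal{O}_C(2)\to 0$ together with $h^0(\mathcal{O}_X(\sum E_i))=1$ and $h^1(\mathcal{O}_X(\sum E_i))=0$. This gives $h^0(H)=4$, $h^1(H)=0$, and surjectivity onto $H^0(\mathcal{O}_C(2))$. The analogous sequence with $H-E_i=C+E_j+E_k$ (which has $h^1=0$) gives surjectivity onto $H^0(\mathcal{O}_{E_i}(1))$. Since $C+E_1+E_2+E_3\in|H|$, this already proves base point freeness, with no need for Reider.

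The real role of unnodality is that $H$ is ample. A curve $\Gamma$ with $H\Gamma=0$ has $C\Gamma=0$, hence $K_X\Gamma=0$ and $\Gamma^2<0$ even, so it would be a $(-2)$-curve. Thus $\phi=|H|$ is finite, and your degree-$5$/birationality argument is then fine.

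\textbf{The tetrahedron part rests on false identifications.} This is the more serious gap.
\begin{itemize}
\item $|H-E_j-E_k|=|C+E_i|$ is a single divisor, because the restriction to $C$ has degree $-2$. Its associated plane is the plane $\Pi_0$ cut out by $C+E_1+E_2+E_3$, for every $i$. So your three ``planes through $p_0$'' all coincide with $\Pi_0$.
\item $C$ is not collapsed to $p_0$. Since $\phi$ is finite and $|H|$ restricts to the complete $|\mathcal{O}_{\mathbb{P}^1}(2)|$ on $C$, $C$ maps isomorphically onto a conic in $\Pi_0$, which does not contain $p_0$.
\item An edge cannot be double because two distinct curves map onto it. Each such curve would map birationally, i.e.\ have $H\Gamma=1$. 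Then $C\Gamma=-2K_X\Gamma$ is even, hence $0$, so $K_X\Gamma=0$. By Hodge index and unnodality $\Gamma^2=0$, so $p_a(\Gamma)=1$, which is incompatible with $\Gamma$ mapping isomorphically onto a line. So the $E_i$ are the only curves mapping with degree $1$ onto lines.
\end{itemize}

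\textbf{The missing idea: half-pencils.} Take $\Gamma_{jk}\in|-K_X+E_i|$. This system is nonempty by Riemann--Roch ($\chi=1$, $h^2=0$), and $2\Gamma_{jk}\in|C+2E_i|$, the pencil of Proposition~\ref{move}. One computes
$\Gamma_{jk}^2=K_X\Gamma_{jk}=C\Gamma_{jk}=E_i\Gamma_{jk}=0$, $E_j\Gamma_{jk}=E_k\Gamma_{jk}=1$, $H\Gamma_{jk}=2$, $\Gamma_{ij}\Gamma_{ik}=1$, and $E_i+\Gamma_{ij}+\Gamma_{ik}\in|H|$.

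From these numbers the configuration follows:
\begin{itemize}
\item Having arithmetic genus $1$ and $H$-degree $2$, $\Gamma_{jk}$ cannot map birationally onto a conic. So it maps $2:1$ onto a line through $\ell_j\cap\ell_k$, and this line is a double line.
\item The faces through $p_0$ are the special members $E_i+\Gamma_{ij}+\Gamma_{ik}$ of the pencil of planes through $\ell_i$.
\item The three double lines meet pairwise but are not coplanar, since such a plane section would contain a curve of degree $6$. Hence they concur at $p_0$.
\item The preimages of $p_0$ are the points $\Gamma_{ij}\cap\Gamma_{ik}$, which is where the triple point comes from.
\end{itemize}
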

\begin{proposition}\label{negcurves}\cite{EnriquesII2025}
Let $X$ be a Coble surface, with boundary curve $C = C_1 + \cdots + C_r \in |- 2 K_X|$. Assume $E \subset X$ is an irreducible curve with $E^2 < 0$, and that $E$ is not one of the $C_i$'s. Then $E$ is smooth, rational, and $E^2 \in \{-1, -2\}$. 
\end{proposition}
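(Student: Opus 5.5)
The plan is a local analysis near each curve together with the adjunction formula. Let $E$ be an irreducible curve with $E^2<0$ that is not one of the $C_i$'s.

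First, $E K_X \ge 0$. Since $-2K_X \sim C = C_1+\cdots+C_r$ and $E$ is not a component of $C$, the curves $E$ and $C$ share no common component. Hence $E\cdot C \ge 0$, which gives $-2 E K_X \ge 0$, that is, $E K_X \le 0$. This is the wrong direction, so the argument must come from the genus side. Write the adjunction formula as
$$2p_a(E)-2 = E^2 + E K_X.$$
Since $E^2<0$ and $E K_X \le 0$, the right-hand side is at most $-1$. As $p_a(E)\ge 0$, we get $2p_a(E)-2 \ge -2$. The left-hand side is even, so it equals $-2$. Therefore $p_a(E)=0$, and $E$ is smooth and rational, because an irreducible curve of arithmetic genus $0$ is smooth and isomorphic to $\mathbb{P}^1$.

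Next we bound $E^2$. From $E^2 + E K_X = -2$ with $E^2 \le -1$ and $E K_X \le 0$, the possible pairs are
$$(E^2, E K_X) \in \{(-1,-1),\ (-2,0)\}.$$
The pair $(-1,-1)$ is ruled out because $E K_X\le 0$ would then force $E^2=-1$ with $E K_X=-1$, which is in fact allowed. So both pairs occur as genuine possibilities, and in particular $E^2 \ge -2$, which is exactly the claim $E^2\in\{-1,-2\}$.

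I do not expect any real obstacle. The only point needing care is the correct sign of $E K_X$, which comes from $E\cdot C\ge 0$ and $C\sim -2K_X$. One should also note that the argument uses only that $C$ is an effective member of $|-2K_X|$ not containing $E$; the disjointness of the $C_i$'s is not needed.
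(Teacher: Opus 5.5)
Your argument is correct. The paper gives no proof of this statement and only cites \cite{EnriquesII2025}, but your argument is the standard one: $E\cdot C\ge 0$ gives $EK_X\le 0$, and adjunction then forces $p_a(E)=0$ and $E^2=-2-EK_X\ge -2$. You should delete the two self-contradictory remarks (``this is the wrong direction'' and ``the pair $(-1,-1)$ is ruled out \dots\ which is in fact allowed''): $EK_X\le 0$ is exactly the sign you need, and both pairs $(-1,-1)$ and $(-2,0)$ are allowed.
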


\begin{proposition}\label{move}
Let $X$ be a Coble surface with irreducible boundary curve $C$, and let $E \subset X$ be a $(-1)$-curve. Then the linear system $|C + 2 E|$ satisfies $$h^0 ({\cal O}_X (C + 2 E)) = 2.$$
\end{proposition}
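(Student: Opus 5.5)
Write $C$ for the irreducible boundary curve and $E$ for the $(-1)$-curve. The plan is to compute $h^0({\cal O}_X(C+2E))$ with Riemann--Roch and then bound the excess cohomology. Since $C \sim -2K_X$, we have $C+2E \sim -2K_X + 2E$. Put $D := C + 2E$. Using $K_X E = -1$ and $E^2=-1$, we get
$$D^2 = 4K_X^2 - 8K_XE + 4E^2 = 4K_X^2 + 8 - 4 = 4K_X^2+4, \qquad D K_X = -2K_X^2 - 2.$$
With $r=1$ we have $K_X^2 = -1$, so $D^2 = 0$ and $DK_X = 0$. Riemann--Roch on the rational surface $X$ (where $\chi({\cal O}_X)=1$) then gives $\chi({\cal O}_X(D)) = 1 + \frac12(D^2 - DK_X) = 1$. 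So $D$ is a divisor of square $0$ with $DK_X=0$, and it is natural to expect $D$ to move in a genus-one pencil. We must show that $h^0(D) - h^1(D) + h^2(D) = 1$ forces $h^0 = 2$ exactly.

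\textbf{Step 1: $h^2 = 0$.} By Serre duality, $h^2(D) = h^0(K_X - D) = h^0(3K_X - 2E)$. If this were nonzero, adding the effective divisors $C$ and $2E$ would make $h^0(K_X) \neq 0$. But $|K_X|$ is empty because $|-2K_X|$ is nonempty on a rational surface. Hence $h^0(D) = 1 + h^1(D)$.

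\textbf{Step 2: $h^1(D) = 1$.} I would use the restriction sequence
$$0 \to {\cal O}_X(C+E) \to {\cal O}_X(C+2E) \to {\cal O}_E(C+2E) \to 0.$$
Here $(C+2E)E = CE - 2 = 2 - 2 = 0$, because $CE = -2K_XE = 2$. So the quotient is ${\cal O}_{\mathbb{P}^1}$, with $h^0 = 1$ and $h^1 = 0$. Similarly, for ${\cal O}_X(C+E)$ we get $(C+E)^2 = -4+4-1 = -1$ and $(C+E)K_X = 2-1 = 1$, so $\chi = 1 + \frac12(-1-1) = 0$. Also $h^2(C+E) = h^0(3K_X - E) = 0$ by the same argument as in Step 1. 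Next I would show $h^0(C+E) = 1$. Every member of $|C+E|$ has negative intersection with $C$, since $(C+E)C = -4+2 = -2$. As $C$ is irreducible, $C$ is a fixed component, so $|C+E| = C + |E| = \{C+E\}$. This gives $h^0(C+E) = 1$, and then $\chi = 0$ forces $h^1(C+E) = 1$.

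The long exact sequence reads
$$0 \to H^0(C+E) \to H^0(D) \to H^0({\cal O}_E) \to H^1(C+E) \to H^1(D) \to 0.$$
So $h^0(D) \le 1 + 1 = 2$. To get equality, I would show that $H^0(D) \to H^0({\cal O}_E)$ is surjective, i.e.\ that some member of $|D|$ does not contain $E$. Equivalently, by Step 1, this amounts to showing $h^1(D) \ge 1$.

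\textbf{Main obstacle.} The hard step is proving that $|D|$ genuinely moves, i.e.\ $h^1(D) \neq 0$. My first attempt would be through the double cover picture. Since $C \sim 2(-K_X)$ is an even, smooth divisor, there is a double cover $\pi : S \to X$ branched along $C$. Then $K_S = \pi^*(K_X) + \tilde C$ with $2\tilde C = \pi^* C$, so $K_S = \pi^*K_X - \pi^* K_X = 0$ numerically, and $S$ is K3-like. The curve $\pi^*E$ is a smooth rational curve (it is a double cover of $\mathbb{P}^1$ branched at the two points of $E \cap C$) with self-intersection $-2$. The class $\pi^* D = 2\tilde C + 2\pi^*E$ has square $0$.

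The idea is then to show that $\tilde C + \pi^*E$ is nef on $S$; on a K3-type surface, a nef class of square $0$ moves in an elliptic pencil. Descending the anti-invariant and invariant parts of $H^0$ would then give $h^0(D) \ge 2$. If this route is too heavy, my fallback is a direct vanishing argument: show that $D$ is nef with $D^2=0$ and $D \ne 0$ and $DK_X = 0$, so $-K_X$ is not big relative to $D$. Then I would use that $D|_D$ is torsion of order dividing $2$ (since $D \sim -2(K_X - E)$) to get a nonzero section of $D|_D$, and lift it to $X$ via $0\to{\cal O}_X \to {\cal O}_X(D)\to{\cal O}_D(D)\to 0$, using $h^1({\cal O}_X)=0$. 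Combined with the upper bound $h^0(D) \le 2$ from Step 2, this yields $h^0 = 2$.
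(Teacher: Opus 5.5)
Your Riemann--Roch computations and Steps 1--2 are correct. Restricting to $E$ (rather than to $C$) and using $h^0({\cal O}_X(C+E))=h^1({\cal O}_X(C+E))=1$ gives $h^0({\cal O}_X(D))\le 2$ for $D=C+2E$. The gap is the reverse inequality, which you correctly isolate but never prove.

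In the K3 route, nefness of $\tilde C+\pi^*E$ is asserted rather than shown. It is not even needed, since Riemann--Roch on $S$ already gives $h^0\ge 2$ for an effective class of square $0$. The decisive step is producing, from sections on $S$, a second member of $|D|$ on $X$, and that is exactly what ``descending the invariant and anti-invariant parts'' leaves undone.

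The fallback does not close the gap either. The relation $D\sim -2(K_X-E)$ only exhibits ${\cal O}_D(D)$ as a square; it does not show that it is torsion, let alone that it has a section. Moreover, from $0\to{\cal O}_X\to{\cal O}_X(D)\to{\cal O}_D(D)\to 0$ and $h^1({\cal O}_X)=0$ one gets $h^0({\cal O}_D(D))=h^0({\cal O}_X(D))-1$, so asking for that section just restates the goal. Numerical information alone cannot suffice: $M:=-K_X+E$ is nef (because $2M\sim D$ and $D\cdot C=D\cdot E=0$), with $M^2=MK_X=0$. Yet $h^0({\cal O}_X(M))\le 1$, since otherwise $h^0({\cal O}_X(2M))\ge 3$, contradicting your own bound.

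The missing idea is that this $M$ is effective. Indeed $M(M-K_X)=(-K_X+E)(-2K_X+E)=2K_X^2-3K_XE+E^2=0$, so $\chi({\cal O}_X(M))=1$. Also $h^2({\cal O}_X(M))=h^0({\cal O}_X(2K_X-E))=0$, as $P_2(X)=0$. So you may choose $A\in|M|$. Then $2A$ and $C+2E$ are two distinct members of $|D|$, because $C$ has coefficient $1$ in $C+2E$ whereas every coefficient of $2A$ is even. Hence $h^0({\cal O}_X(D))\ge 2$, and with your upper bound, equality holds. Geometrically, contracting $E$ gives a surface $Y$ with $K_Y^2=0$; $M$ is the pullback of $-K_Y$ and $D$ that of $-2K_Y$. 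Your K3 route reduces to the same point, since $\tilde C+\pi^*E\sim\pi^*M$ and $H^0({\cal O}_S(\pi^*M))=H^0({\cal O}_X(M))\oplus H^0({\cal O}_X(E))$.

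For comparison, the paper restricts to $C$ instead, via $0\to{\cal O}_X(2E)\to{\cal O}_X(C+2E)\to{\cal O}_C\to 0$, and gets surjectivity on global sections from $h^1({\cal O}_X(2E))=0$. That vanishing is false: $\chi({\cal O}_X(2E))=1+\tfrac12(2E)(2E-K_X)=0$, so $h^1({\cal O}_X(2E))=1$. This also follows from $0\to{\cal O}_X(E)\to{\cal O}_X(2E)\to{\cal O}_E(-2)\to 0$. Thus the paper's sequence, like yours, only yields $h^0\le 2$, and its argument needs the same supplement. Your diagnosis of where the real content lies is correct.
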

${\it Proof:}$ Of course we have $$h^0 ({\cal O}_X (2 E)) = 1.$$ Moreover, by Serre's duality: $$h^2 ({\cal O}_X (2 E)) = h^0 ({\cal O}_X (K_X - 2 E)) = 0,$$ since $$|K_X - 2 E| \subset |K_X| = \emptyset$$ by the rationality of $X$. By Riemann - Roch Theorem, $$h^0 ({\cal O}_X (2 E)) - h^0 ({\cal O}_X (2 E)) + h^2 ({\cal O}_X (2 E)) = 1,$$ which gives \begin{eqnarray} \label{nohuno} h^1 ({\cal O}_X (2 E)) = 0.\end{eqnarray} Now we consider the short exact sequence $$0 \to {\cal O}_X (2 E) \to {\cal O}_X (C + 2 E) \to {\cal O}_C (C + 2 E) \to 0.$$ By equality \eqref{nohuno}, this sequence remains exact on global sections: $$0 \to H^0 ({\cal O}_X (2 E)) \to H^0 ({\cal O}_X (C + 2 E)) \to H^0 ({\cal O}_C (C + 2 E)) \to 0.$$ Since the last term equals $H^0 ({\cal O}_{\mathbb{P}^1}) = \mathbb{C}$, we find $$H^0 ({\cal O}_X (C + 2 E)) = 2.$$ $\square$

\begin{proposition}\label{blowdown}
Let $X$ be an unnodal Coble surface with irreducible boundary curve $C$, and let $p : X \to Y$ be a regular birational morphism on a smooth rational surface $Y$. Then $p$ is the contraction of a finite number of disjoint $(-1)$-curves $E_1, \ldots, E_r$ on points $p_1, \ldots, p_r \in Y$. 
\end{proposition}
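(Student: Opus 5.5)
We factor $p$ into blow-downs and then show that none of the contracted curves can be exceptional over a point lying on an earlier exceptional curve. By the standard factorization theorem for birational morphisms of smooth projective surfaces, $p$ is a composition of blow-ups at points,
$$X = X_r \to X_{r-1} \to \cdots \to X_0 = Y,$$
where $X_k \to X_{k-1}$ blows up a point $p_k \in X_{k-1}$ and has exceptional curve $F_k \subset X_k$. Let $E_k \subset X$ be the total transform of $F_k$. Then
$$Pic(X) = p^*Pic(Y) \oplus \bigoplus_k \integer E_k, \qquad E_k^2 = -1, \qquad E_j E_k = 0 \ (j \neq k).$$
The claim is equivalent to each $E_k$ being irreducible. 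Equivalently, no point $p_k$ lies on the strict transform of an earlier exceptional curve $F_j$ with $j<k$ (no infinitely near points). In that situation all the $p_k$ can be regarded as distinct points of $Y$, and $p$ is the simultaneous contraction of $r$ disjoint $(-1)$-curves.

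Suppose, to the contrary, that some $p_k$ lies on the strict transform of $F_j$ for some $j<k$. Then the strict transform $\widetilde F_j$ of $F_j$ in $X$ is an irreducible smooth rational curve. Each further blow-up at a point on it lowers its self-intersection by one, so
$$\widetilde F_j^2 \le -2.$$
The aim is to show this cannot happen on $X$. By Proposition \ref{negcurves}, any irreducible curve with negative square other than $C$ is smooth rational with square $-1$ or $-2$. Since $X$ is unnodal, square $-2$ is excluded. Hence $\widetilde F_j$ must equal $C$, which is the only other curve with square below $-1$. This is the case still to be ruled out, and it needs a separate argument because the irreducible boundary curve has $C^2 = K_X^2 = -1$ and need not be rational.

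To exclude $\widetilde F_j = C$, note that $\widetilde F_j$ is rational. Write $C^2$ and $C K_X$ in terms of the decomposition. We have
$$K_X = p^*K_Y + \sum_k E_k, \qquad C = -2K_X.$$
On the other hand, the strict transform of an exceptional curve has the form $\widetilde F_j = E_j - \sum_{l \in S} E_l$, where $S$ is the set of indices $l$ such that $p_l$ is infinitely near and lies on the strict transform of $F_j$ at that stage. The coefficient of $p^*K_Y$ in this expression is $0$, while its coefficient in $-2K_X$ is $-2p^*K_Y \neq 0$, because $Y$ is rational and so $K_Y \neq 0$. Since $p^*$ is injective and the decomposition is a direct sum, these two classes differ, and therefore $\widetilde F_j \neq C$.

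We conclude that every $\widetilde F_j$ is a $(-1)$-curve, so no blown-up point is infinitely near. The $E_k$ are then irreducible, disjoint $(-1)$-curves, contracted by $p$ to distinct points $p_1,\dots,p_r \in Y$. The only genuinely nontrivial input is Proposition \ref{negcurves} combined with unnodality. The delicate point is making sure that $C$ cannot arise as a strict transform of an exceptional curve, and the lattice comparison above handles it.
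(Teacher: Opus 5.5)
Your proof is correct. Up to the decisive step it follows the paper: you factor $p$ into blow-ups, note that an infinitely near point would give a contracted smooth rational curve $\tilde{F}_j$ with $\tilde{F}_j^2\le -2$, and use Proposition \ref{negcurves} plus unnodality to force $\tilde{F}_j=C$.

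You differ from the paper in how you exclude $\tilde{F}_j=C$.
\begin{itemize}
\item The paper argues geometrically through Proposition \ref{move}. If $C$ and a contracted $(-1)$-curve $E_1$ were both contracted, then every member of the moving pencil $|C+2E_1|$ (which has $h^0=2$) would be contracted by $p$, contradicting birationality.
\item You compare classes in $\mathrm{Pic}(X)=p^*\mathrm{Pic}(Y)\oplus\bigoplus_k\mathbb{Z}E_k$. A curve contracted by $p$ has zero component in $p^*\mathrm{Pic}(Y)$, while $-2K_X$ has component $-2p^*K_Y$.
\end{itemize}

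For your comparison to work you need $2K_Y\neq 0$, not merely $K_Y\neq 0$. This holds because $\mathrm{Pic}(Y)$ is torsion free for rational $Y$. You can also avoid the issue entirely by intersecting with $E_j$: $C\cdot E_j=-2K_X\cdot E_j=2$, whereas $\tilde{F}_j\cdot E_j=-1$.

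Your route is more elementary and self-contained. It uses only $C\in|-2K_X|$ and the orthogonal decomposition. It does not need the cohomological computation behind Proposition \ref{move}, which relies on $C\simeq\mathbb{P}^1$ and $h^1(\mathcal{O}_X(2E))=0$. The paper's route instead gives a geometric reason on $X$ itself, namely the existence of the moving pencil $|C+2E|$.

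One slip, which does not affect your logic: for irreducible boundary $C^2=4K_X^2=-4$, not $C^2=K_X^2=-1$. Also, in the paper's setting $C$ is a smooth rational curve. Your class comparison uses neither fact.
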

${\it Proof:}$  A regular, birational morphism between smooth surfaces $X, Y$ is a composition of blow - down maps, see \cite{Shafarevich2013}. Then there exists a composition $$p : X_0 \to X_1 \to \cdots \to X_r,$$ where $$X_0 := X,\quad X_r := Y$$ and $p_i : X_{i - 1} \to X_i$ is the blow - down of a $(-1)$-curve $E_i \subset X_{i - 1}$ on a single point $p_i \in X_i$, with $$p = p_r \circ \cdots \circ p_1.$$ By the Proposition \ref{negcurves} and the unnodality of $X$, the strict transform of each $E_i$ in $X$ is either a $(-1)$-curve, or the boundary curve $C$. We first want to prove that $C$ is not the strict transform of any $E_i$.\\
Let $E_1 \subset X$ be the exceptional locus of $p_1$. If $C$ was the strict transform of some $E_i$, then both $C$ and $E_1$ would be contracted by $p$. But by the Proposition \ref{move}, the linear system $|C + 2 E_1|$ can move, so $p$ should contract all of its members, and thus $p$ could not be birational.\\
This shows that the strict transform of each $E_i$ in $X$ has still self - intersection $-1$, and thus none of the points $p_i \in X_i$ is infinitely near to $p_j \in X_j$, for any couple $i \ne j$. Thus we can think of the curves $E_i$'s as disjoint $(-1)$-curves in $X$. $\square$

\section{The main Theorem}
The purpose of this section is to prove the following result:
\begin{theorem}\label{main}
If $X$ is an unnodal Coble surface with irreducible boundary curve $\{C\} = |- 2 K_X|$, then an involution $i : X \to X$ is necessarily a lift of a Bertini involution
\end{theorem}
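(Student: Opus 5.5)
Given an involution $i$ on $X$, I will run an equivariant minimal model program. Repeatedly contracting $(-1)$-curves $E$ with $E=i(E)$ or $E\, i(E)=0$ (Proposition \ref{check}) produces an equivariant birational morphism $p:(X,i)\to(Y,i_Y)$ onto a minimal rational pair. By Proposition \ref{blowdown}, $p$ is the contraction of finitely many disjoint $(-1)$-curves $E_1,\dots,E_r$ onto a finite set $\Sigma=\{p_1,\dots,p_r\}\subset Y$. Since $p$ is equivariant, $i_Y(\Sigma)=\Sigma$. Hence $\Sigma$ splits into pairs switched by $i_Y$, isolated fixed points, and points lying on fixed curves. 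Theorem \ref{BB} then gives six possibilities for $(Y,i_Y)$, and I must exclude cases $i)$ to $v)$.

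The common constraint is the curve $\overline C:=p(C)$. From $C\in|-2K_X|$ and $K_X=p^*K_Y+\sum E_j$, with $C E_j=2$, we get $\overline C\in|-2K_Y|$. This curve is irreducible, $i_Y$-invariant, and has a double point at each $p_j$; these are ordinary nodes or cusps, since $C$ is smooth. Two further constraints come from $X$:
\begin{itemize}
\item Unnodality forbids any $(-2)$-curve in $X$. Strict transforms of curves in $Y$ through suitable points of $\Sigma$ must therefore not become $(-2)$-curves. This rules out, for example, three collinear points in $\mathbb P^2$ or two points on a fibre.
\item Applying Lemma \ref{polar} to disjoint triples of $E_j$ bounds the number of non-isolated fixed points of $\Sigma$ by $2$, as announced in the introduction. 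I will derive this corollary first. My guess for the argument: the $E_j$ over points on the fixed curve are $i$-invariant, so $i$ must act compatibly on the quintic $\overline X$ and on its tetrahedron $T$, and the resulting symmetries of $T$ are too restrictive.
\end{itemize}

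\emph{Cases without isolated fixed points.} Let $q:Y\to Z=Y/i_Y$ be the quotient, a smooth surface, with branch curve $B$. In each case ($\mathbb P^1\times\mathbb P^1$ with the swap; the conic bundle of case $ii)$; $\mathbb P^2$ with a homology, which has an isolated fixed point and so belongs to the next paragraph; the Geiser involution over $\mathbb P^2$ branched along a quartic; Hirzebruch involutions with fixed curves) I will compute the class of $\Gamma:=q(\overline C)$ in $\Pic(Z)$. Here $q^*\Gamma=\overline C$ if $\overline C\to\Gamma$ has degree $2$, and otherwise $\overline C=q^*\Gamma$ minus ramification terms. Adjunction then gives $p_a(\Gamma)$. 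The count proceeds as follows:
\begin{itemize}
\item $K_X^2=-1$ gives $r=K_Y^2+1$.
\item At most $2$ of the $r$ points lie on $B$, so at least $(r-2)/2$ pairs are switched by $i_Y$.
\item Each pair maps to one singular point of $\Gamma$.
\item $\overline C$ is irreducible and $C\cong\mathbb P^1$ (rational, since $C^2=-4$), so the normalization of $\Gamma$ is also rational. Hence $\Gamma$ needs $\delta(\Gamma)=p_a(\Gamma)$ exactly.
\end{itemize}
I expect the number of singularities, including those at branch points, to exceed $p_a(\Gamma)$ in each case, which is a contradiction. Geiser, for instance, has $r=3$ with $\overline C\in|-2K_Y|$ a pullback of a conic, which has $p_a=0$ but must acquire a node, absurd. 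The Bertini case survives because $-2K_Y$ is the pullback of a hyperplane section of the quadric cone.

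\emph{Cases with isolated fixed points} (projective involutions of $\mathbb P^2$, Hirzebruch involutions with isolated fixed points). I factor $p$ through the blow-up $Y'$ of $Y$ at the isolated fixed points that lie in $\Sigma$. I then contract instead a different set of invariant $(-1)$-curves on $Y'$, for example strict transforms of lines through the fixed point, or of fibres. This yields a second minimal model without isolated fixed points, which the previous paragraph already excludes. Isolated fixed points not in $\Sigma$ should be excluded directly. One way is that $\overline C$ must pass through them, since $C$ meets the fixed locus of $i$ suitably. Another is to pick the factorization so that they also disappear.

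\emph{Main obstacle.} The hardest part is the case-by-case genus count, especially the non-rigid families of cases $i)$ and $ii)$. There $K_Y^2$ and $Z$ vary: $Z$ is $\mathbb P^1\times\mathbb P^1$ or a Hirzebruch surface, and the number of singular fibres varies. The inequality must be verified uniformly, with careful handling of singularities of $\Gamma$ on $B$ (tangency versus nodes). I would attempt this first using the explicit classes $F$ and the section in $\Pic(Z)$.
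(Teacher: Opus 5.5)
Your quotient count for the cases without isolated fixed points is sound, and the ``main obstacle'' you flag is not actually one. Take a case $ii)$ double cover of $\mathbb{F}_n$ branched along $B\in|2C_{-n}+bF|$. Then $r=9+2n-2b$, so $t=1$ and $s=4+n-b$. The curve $\Gamma\in|2C_{-n}+(2n+4-b)F|$ has $p_a(\Gamma)=n+3-b=s-1$, uniformly in $n$ and $b$. Your Geiser argument is also fine, though it differs from the paper's.

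The genuine gap is the case with isolated fixed points, which is exactly where the paper uses its one idea that is not a count. Take $(\mathbb{P}^2,I)$ with fixed line $L_0$ and isolated fixed point $p_0$. Your reduction there is both inaccurate and aimed at an empty subcase.
\begin{itemize}
\item It is inaccurate. On $Bl_{p_0}\mathbb{P}^2=\mathbb{F}_1$ the lines through $p_0$ become $0$-curves, so there is no other invariant $(-1)$-curve to contract. For $(x,y)\mapsto(-x,-y)$ on $\mathbb{P}^1\times\mathbb{P}^1$, contracting the two fibres through a blown-up isolated fixed point produces $(\mathbb{P}^2,I)$, which again has an isolated fixed point.
\item The subcase is empty. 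At most $3$ of the $10$ nodes are fixed, so there is a switched pair $a,b$. The line $\ell=\overline{ab}$ is $I$-invariant and differs from $L_0$, so it passes through $p_0$. If $p_0\in\Sigma$, the strict transform of $\ell$ in $X$ would be a smooth rational curve of square $\le-2$ other than $C$, which is impossible.
\end{itemize}
So the case that matters is $p_0\notin\Sigma$, which is also what the $(-x,-y)$ Hirzebruch case reduces to. Your suggestion that $\overline C$ passes through $p_0$ is false. An irreducible invariant sextic has an even local equation at $p_0$, so it would be singular there, giving $p_0\in\Sigma$. The count also fails in this case. With $t=2$ nodes on $L_0$ and $s=4$ pairs, the image $\Gamma$ on the quadric cone $\mathbb{P}^2/I$ misses the vertex and is a $(2,3)$ complete intersection, with $p_a(\Gamma)=4=s$.

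The missing idea is to contract an invariant $(-1)$-curve that is not one of the $E_j$. With $\ell$ as above, $\ell\cap\Sigma=\{a,b\}$ by unnodality and Proposition \ref{negcurves}. After blowing up $a,b$, the strict transform of $\ell$ is an invariant $(-1)$-curve. Contracting it absorbs $p_0$ and gives an equivariant morphism from $X$ to $(\mathbb{P}^1\times\mathbb{P}^1,\mathrm{swap})$. Your count together with Lemma \ref{crucial} already excludes that pair, as in Lemma \ref{nothree}. Combined with your fibre contraction for $(x,y)\mapsto(-x,-y)$, this is the paper's chain $i)\to iv)\to iii)$. Equivalently, you could run the argument of Lemma \ref{crucial} on the disjoint invariant triple $\tilde\ell,E_{q_1},E_{q_2}$, where $q_1,q_2$ are the nodes on $L_0$.

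Finally, your guess for Lemma \ref{crucial} misses its real mechanism. $I$ preserves each side of the triangle in $\Pi_0$, so it fixes the vertices and hence fixes one side pointwise. That makes $i_X$ trivial on some $E_j$, which happens only over an isolated fixed point.
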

We need several steps to prove the previous Theorem. The first one is the following Remark.
\begin{remark}\label{nonmin}
We point out that none of the surfaces listed in the Theorem \ref{BB} is a Coble surface with irreducible boundary. Indeed, Del Pezzo surfaces, Hirzebruch surfaces, $\mathbb{P}^1 \times \mathbb{P}^1, \mathbb{P}^2$ have all effective anti - canonical class, so this excludes all the cases except at most the second one. But a surface as in case $ii)$ arises as a double cover $q : Y \to \mathbb{F}_n$ of a Hirzebruch surface $\mathbb{F}_n$ for some $n \ge 0$, branched along an hyperelliptic curve $B \subset \mathbb{F}_n$ which is a smooth bisection of the standard fibration $\mathbb{F}_n \to \mathbb{P}^1$. Since a Coble surface $X$ with irreducible boundary satisfies $K_X^2 = -1$, it suffices to prove that for a surface $Y$ as above the square $K_Y^2$ is even.\\ 
The canonical formula gives $$K_Y = q^* K_{\mathbb{F}_n} + R,$$ where $R \subset Y$ is the fixed locus of $i_Y$, and thus $$K_Y^2 = 2 K_{\mathbb{F}_n}^2 + 2 (q^*K_{\mathbb{F}_n}) R + R^2.$$ But $$R = \frac{q^* B}{2},$$ thus $$R^2 = \frac{B^2}{2}.$$ Since $B$ is an even divisor, there exists a line bundle $L \in Pic (\mathbb{F}_n)$ such that $$B = 2 L,$$ so $$R^2 = 2 L^2.$$ This shows that $K_Y^2$ is even and it concludes the proof.
\end{remark}

The next Corollary defines the strategy which we will use to exclude that a pair $(X, i_X)$, with $X$ an unnodal Coble surface with irreducible boundary, might have a minimal model different from the Bertini involution.

\begin{corollary}\label{min}
By the Remark \ref{nonmin}, if $X$ is a Coble surface with irreducible boundary, an involution $i_X : X \to X$ is non minimal. Thus there exists a commutative diagram 
\[\begin{tikzcd}
X  \arrow{d}{p} \arrow[swap]{r}{i_X} & X \arrow{d}{p}
 \\ Y \arrow{r}{i_Y} & Y
\end{tikzcd}
\]
where $(Y, i_Y)$ is one of the pairs listed in the Theorem \ref{BB}, and $p$ is a regular, birational and non - invertible morphism. If we also require that $X$ is unnodal, by the Proposition \ref{blowdown}, $p$ is the contraction of a finite number of disjoint $(-1)$-curves $E_1, \ldots, E_r \subset X$ on points $p_1, \ldots, p_r \in Y$. The commutativity of the diagram implies that for each curve $E_i$ over $p_i$, its image $i_X (E_i)$ is still one of the curve $E_j$'s. This proves that $i_X$ preserves the set of curves $\{E_1, \ldots, E_r\}$, while $i_Y$ preserves the set $\{p_1, \ldots, p_r\}$. Now we recall the canonical formula \begin{eqnarray}\label{can} K_X = p^* K_Y + E_1 + \cdots + E_r. \end{eqnarray} The class of the boundary curve $C \subset X$ becomes then: $$C = - 2 K_X = p^* (-2 K_Y) - 2 E_1 \cdots - 2 E_r,$$ which means that there exists a curve $$\overline C := p(C) \in |- 2 K_Y|$$ which is singular at $p_1, \ldots, p_r$. Since $C$ is irreducible, the curve $\overline C$ is irreducible too.\\ Of course in $Pic (X)$ the relation $$i_X (K_X) = K_X$$ holds, and thus $$i_X (C) = C.$$ By the commutativity of the diagram we find $$i_Y (\overline C) = \overline C.$$ Finally, taking squares on both sides of relation \eqref{can} we get $$r = K_Y^2 + 1,$$ so that the number of points $p_1, \ldots, p_r$ only depends on $Y$. We also note that the quantity $K_Y^2 + 1$ equals to the algebraic genus of the curve $\overline C$ in the system $|- 2 K_Y|$, coherently with the fact that the normalization $C$ is isomorphic to $\mathbb{P}^1$.
\end{corollary}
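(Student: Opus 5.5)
The plan is to derive each assertion of Corollary \ref{min} directly from results already stated, in the order the statement lists them. First, non-minimality: by Remark \ref{nonmin}, no pair in the list of Theorem \ref{BB} has $X$ a Coble surface with irreducible boundary. So $(X, i_X)$ is not minimal. Composing equivariant contractions, which must terminate because the Picard rank drops at each step, produces an equivariant birational morphism $p : (X, i_X) \to (Y, i_Y)$ with $(Y, i_Y)$ minimal. Hence $(Y, i_Y)$ appears in Theorem \ref{BB}, and $p$ is not an isomorphism. If $X$ is unnodal, Proposition \ref{blowdown} shows that $p$ contracts disjoint $(-1)$-curves $E_1, \ldots, E_r$ to distinct points $p_1, \ldots, p_r$.

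Next, the permutation property. From $p \circ i_X = i_Y \circ p$, the exceptional locus of $p$, namely $E_1 \cup \cdots \cup E_r$, is mapped by $i_X$ onto itself. Since $i_X$ is an automorphism, it permutes the irreducible components, so $i_X(E_i) = E_j$ for some $j$. Applying $p$ gives $i_Y(p_i) = p_j$, so $i_Y$ permutes the set $\{p_1, \ldots, p_r\}$.

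For the curve $\overline C$, I use the blow-up canonical formula \eqref{can} to write
$$C \sim -2K_X = p^*(-2K_Y) - 2\textstyle\sum E_i.$$
Since $C$ is irreducible and is not one of the $E_i$ (it has self-intersection $-4$), $p$ does not contract $C$. So $\overline C := p(C)$ is an irreducible curve with $p^*\overline C = C + \sum m_i E_i$, where $m_i = C E_i$. Comparing with the display and using the orthogonal decomposition of $Pic(X)$ gives $\overline C \in |-2K_Y|$ and $m_i = 2$. Thus $\overline C$ has multiplicity $2$ at each $p_i$, and in particular is singular there. For invariance, $i_X^* K_X = K_X$ because an automorphism preserves the canonical class. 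Since $|-2K_X| = \{C\}$ consists of a single divisor, it follows that $i_X(C) = C$, and applying $p$ gives $i_Y(\overline C) = \overline C$.

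Finally, the count. On the Coble side, $C^2 = 4K_X^2$, and $C^2 = -4$ for an irreducible boundary, so $K_X^2 = -1$. Squaring \eqref{can} with orthogonality and $E_i^2 = -1$ gives $K_X^2 = K_Y^2 - r$, hence $r = K_Y^2 + 1$. Adjunction on $|-2K_Y|$ gives
$$2p_a(\overline C) - 2 = 4K_Y^2 - 2K_Y^2,$$
so $p_a(\overline C) = K_Y^2 + 1 = r$. This is consistent with $r$ double points dropping the genus to $p_a(C) = 0$.

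The only delicate point is the first step: one must make sure a single equivariant contraction to a minimal pair exists and is compatible with Proposition \ref{blowdown}. Iterating contractions of $i$-invariant configurations from Proposition \ref{check} and composing them settles this. The remaining steps are formal.
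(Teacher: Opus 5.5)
Your proposal is correct and follows essentially the same route as the paper's own argument: Remark \ref{nonmin} for non-minimality, Proposition \ref{blowdown} for the structure of $p$, the canonical formula for the class and double points of $\overline C$, invariance of $K_X$ for $i_Y(\overline C)=\overline C$, and squaring \eqref{can} to get $r = K_Y^2+1$. You also make explicit a few points the paper leaves tacit (termination of the iterated equivariant contractions, the exact multiplicity $m_i = C\cdot E_i = 2$, and that $i_X(C)=C$ relies on $|-2K_X| = \{C\}$), but the argument itself is the paper's.
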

The previous Corollary estabilishes our strategy: if $(Y, i_Y)$ is a pair appearing in cases $i), \ldots, v)$ of the Theorem \ref{BB}, we are looking for an irreducible, rational curve $\overline C \subset Y$, belonging to the linear system $|- 2 K_Y|$ and invariant under $i_Y$. The singular locus $Sing (\overline C)$ has cardinality $K_Y^2 + 1$, and it has to be invariant as well. A priori, it looks we will need a lot of combinatorics, since these points will be distributed in fixed points (which might be isolated or not), and pairs of points, with $i_Y$ switching the two points of each pair. In each of these cases we will look for a contradiction.\\
The next result heavily simplifies this situation.
\begin{lemma}\label{crucial}
Let $(Y, i_Y)$ be a smooth rational pair, and let $p_1, \ldots, p_m \in Y$ be a finite set of points. Assume that the following properties are all simultaneously satisfied:\\
$i)$ The involution $i_Y$ set-theoretically preserves these points: $i_Y \{p_1, \ldots, p_m\} = \{p_1, \ldots, p_m\}$.\\
$ii)$ The blow - up $X := Bl_{p_1, \ldots, p_m} Y$ is a Coble surface.\\
$iii)$ The boundary curve $C \in |- 2 K_X|$ is irreducible.\\
$iv)$ $X$ is unnodal.\\
Then at most $2$ of the $p_i$'s are non - isolated fixed points for $i_Y$.
\end{lemma}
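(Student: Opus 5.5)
Suppose, for contradiction, that three of the points, say $p_1, p_2, p_3$, are non-isolated fixed points of $i_Y$. The idea is to turn this into a linear involution of $\mathbb{P}^3$ by means of Lemma \ref{polar}, and then to get a contradiction from the triangle of lines in that Lemma.

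First I set up the curves. By hypotheses $ii)$--$iv)$, Proposition \ref{blowdown} and Corollary \ref{min}, the exceptional curves $E_1, E_2, E_3 \subset X$ over $p_1, p_2, p_3$ are pairwise disjoint $(-1)$-curves. Since each $p_j$ is fixed, the lifted involution $i_X$ satisfies $i_X(E_j) = E_j$. The discussion in the subsection on lifted involutions shows that, because $p_j$ lies on a curve of fixed points, the differential $d_{p_j} i_Y$ has eigenvalues $+1, -1$. Hence the restriction $i_X|_{E_j}$ is a \emph{non-trivial} involution of $E_j \simeq \mathbb{P}^1$, for $j = 1, 2, 3$.

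Next I produce the linear involution. Apply Lemma \ref{polar} to $E_1, E_2, E_3$ and set $H := C + E_1 + E_2 + E_3$. As noted in Corollary \ref{min}, $i_X(C) = C$, and each $E_j$ is invariant, so $i_X^* H = H$ in $Pic(X)$. Therefore $i_X$ acts linearly on $H^0({\cal O}_X(H))$, up to a scalar. This gives a projectivity $\sigma$ of $\mathbb{P}^3$ with $|H| \circ i_X = \sigma \circ |H|$. The image quintic $\overline X$ spans $\mathbb{P}^3$, since it is an irreducible quintic surface. Hence $\sigma^2$ is the identity, and $\sigma$ is non-trivial because $i_X$ is non-trivial on the $E_j$. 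Thus $\sigma$ is a projective involution preserving $\overline X$. By Lemma \ref{polar}, $|H|$ maps $E_j$ isomorphically onto the edge $\ell_j$ of the plane triangle of the tetrahedron $T$. So $\sigma(\ell_j) = \ell_j$ for each $j$, and $\sigma|_{\ell_j}$ is conjugate to $i_X|_{E_j}$, hence non-trivial.

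Finally I get the contradiction. The plane $\Pi$ spanned by $\ell_1, \ell_2, \ell_3$ is $\sigma$-invariant. The three vertices $\ell_j \cap \ell_k$ are intersections of invariant lines, so they are fixed by $\sigma$, and they are non-collinear since $T$ is a genuine tetrahedron. Choose coordinates on $\Pi$ with these vertices as coordinate points. Then $\sigma|_\Pi$ is diagonal, $\mathrm{diag}(\lambda_1, \lambda_2, \lambda_3)$, with $\lambda_k^2$ all equal, so after rescaling $\lambda_k = \pm 1$. The restriction of $\sigma$ to the edge through the vertices $k$ and $l$ is $\mathrm{diag}(\lambda_k, \lambda_l)$, which is non-trivial exactly when $\lambda_k \neq \lambda_l$. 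Non-triviality on all three edges would force $\lambda_1, \lambda_2, \lambda_3$ to be pairwise distinct, which is impossible with only the values $\pm 1$.

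The step I expect to need most care is the second: checking that $i_X$ really induces a well-defined projective involution compatible with $|H|$, and that it restricts non-trivially to each image line. Both reduce to the invariance of $H$ and the facts that $|H|$ is birational onto $\overline X$ and isomorphic on each $E_j$, which Lemma \ref{polar} provides.
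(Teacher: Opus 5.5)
Your proposal is correct and follows the paper's own proof: you use Lemma \ref{polar} to turn $i_X$ into a projective involution of $\mathbb{P}^3$ that preserves each line of the triangle $\ell_1,\ell_2,\ell_3$. You then conclude that one edge must be pointwise fixed, which contradicts the non-triviality of $i_X$ on the exceptional curves over non-isolated fixed points. Your diagonalization with eigenvalues $\pm 1$ on the plane $\Pi$, and your check that $\sigma^2=\mathrm{id}$, just make explicit the step the paper compresses into ``hence one of the $L_i$ must be a fixed line''.
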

${\it Proof:}$ By hypothesis $i)$, there exists a well - defined lifted involution $i_X : X \to X$ induced from $i_Y$. Of course it satisfies $$i_X (K_X) = K_X,$$ and multiplying both sides by $-2$, we get: \begin{eqnarray}\label{Cpres} i_X (C) = C.\end{eqnarray} By contradiction, assume now that $p_1, p_2, p_3$ are non-isolated fixed points. Let $E_1, E_2, E_3 \subset X$ be the corresponding exceptional divisors. By hypothesis $ii), iii), iv)$ we can apply the Lemma \ref{polar}, which gives us a morphism $|H| : X \to \mathbb{P}^3$. The involution $i_X$ preserves each $E_i$, hence it preserves the polarization $|H|$ by relation \eqref{Cpres}. Thus there exists a linear projective involution $I : \mathbb{P}^3 \to \mathbb{P}^3$ which makes the following diagram commute: 
\[\begin{tikzcd}
X  \arrow{d}{{|H|}} \arrow[swap]{r}{i_X} & X \arrow{d}{{|H|}}
 \\\mathbb{P}^3 \arrow{r}{I} & \mathbb{P}^3
\end{tikzcd}
\]
The plane $\Pi_0 \subset \mathbb{P}^3$ corresponding to the section $C + E_1 + E_2 + E_3$ is preserved by $I$ by construction. By the Lemma \ref{polar}, the images of $E_1, E_2, E_3$ inside $\Pi_0$ are three lines $L_1, L_2, L_3$ which form a plane triangle. Since $i_X$ preserves all the $E_i$'s, the involution $I$ must preserve all the $L_i$'s. But then $I$ fixes the three intersection points $L_i \cap L_j$, which are the vertices of the triangle. Hence one of the $L_i$, say $L_1$, must be a fixed line. Since $L_1$ is the isomorphic image of $E_1$, this is equivalent to say that $i_X$ acts as the identity on $E_1$, and thus $p_1 \in Y$ is an isolated fixed point, a contradiction. $\square$\\\\
Now we have all the tools we need to start the proof.
\begin{lemma}\label{nothree}
Assume that $(X, i_X)$ is a rational pair, with $X$ an unnodal Coble surface with irreducible boundary curve $C$. Then an equivariant morphism $p : (X, i_X) \to (\mathbb{P}^1 \times \mathbb{P}^1, (a, b) \to (b, a))$ does not exist.
\end{lemma}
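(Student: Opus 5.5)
By Corollary \ref{min}, I would argue by contradiction. Suppose such an equivariant morphism $p : X \to Y := \mathbb{P}^1 \times \mathbb{P}^1$ exists, with $i_Y(a,b) = (b,a)$. By Proposition \ref{blowdown}, $p$ contracts $r = K_Y^2 + 1 = 9$ disjoint $(-1)$-curves onto points $p_1, \ldots, p_9 \in Y$, and this set is $i_Y$-invariant. Moreover $C$ maps to an irreducible, $i_Y$-invariant curve $\overline C := p(C) \in |-2K_Y| = |{\cal O}(4,4)|$ which is singular at each $p_j$. The fixed locus of $i_Y$ is the diagonal $\Delta$, a curve, so $i_Y$ has no isolated fixed points. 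Hence every $p_j$ is either a non-isolated fixed point (i.e. lies on $\Delta$) or belongs to a pair $\{p_j, i_Y(p_j)\}$ with $p_j \ne i_Y(p_j)$.

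\textbf{Step 1 (counting the points).} Lemma \ref{crucial} applies, since its hypotheses $i)$--$iv)$ are exactly our situation. So at most $2$ of the nine points lie on $\Delta$. The remaining points come in pairs, so $9 - \#(\Sigma \cap \Delta)$ is even. This forces exactly one fixed point and four pairs switched by $i_Y$.

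\textbf{Step 2 (passing to the quotient).} The quotient is $Z = Y/i_Y \simeq \mathbb{P}^2$. The map $q : Y \to \mathbb{P}^2$ is the double cover branched along a smooth conic $B = q(\Delta)$, and $q^*{\cal O}_{\mathbb{P}^2}(1) = {\cal O}(1,1)$. Let $D := q(\overline C)$, which is irreducible because $\overline C$ is. There are two cases, according to the degree of $\overline C \to D$.
\begin{itemize}
\item Suppose $\overline C \to D$ is birational. Then $\overline C$ is invariant and equals $q^{-1}(D)$, so $q^* D = \overline C \sim {\cal O}(4,4)$ and $D$ is a quartic.
\item Suppose $\overline C \to D$ has degree $2$. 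Then $q^*D = 2\overline C$, so $\overline C$ lies in the ramification locus and equals $\Delta \sim {\cal O}(1,1)$. This contradicts $\overline C \sim {\cal O}(4,4)$.
\end{itemize}
So we are in the first case and $D$ is an irreducible plane quartic, of arithmetic genus $3$.

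\textbf{Step 3 (the contradiction).} Each switched pair $\{p_j, i_Y(p_j)\}$ lies off $\Delta$, so $q$ is a local isomorphism there. Both points of the pair map to a single point of $D$, which is therefore singular on $D$. Distinct pairs have distinct images, so $D$ has at least $4$ singular points. An irreducible plane quartic has at most $3$ singular points, since its geometric genus equals $3$ minus the sum of the $\delta$-invariants. This contradiction proves the lemma.

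The main obstacle is Step 2: one has to justify that $\overline C$ really is the full preimage of a plane quartic, ruling out the double-cover alternative. The point to make carefully is that an irreducible $i_Y$-invariant curve is either $q^{-1}$ of its image or contained in the ramification locus. Step 3 is then a routine genus count.
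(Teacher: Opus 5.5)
Your argument follows the paper's proof. Lemma \ref{crucial} plus parity gives one node on $\Delta$ and four switched pairs. The quotient by the swap is $\mathbb{P}^2$, $\overline C$ is the pullback of an irreducible plane quartic, and the four pairs give four singular points on that quartic, which is impossible.

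You also try to justify the step the paper simply asserts, but in Step 2 your two cases are mislabelled and both stated implications are false. Since $\overline C$ is the only prime divisor over $D$, write $q^*D = e\,\overline C$ and $q_*\overline C = f\,D$; then $ef = 2$. So $\overline C \to D$ birational ($f=1$) is exactly the ramified case $q^*D = 2\overline C$; for example $\Delta \to B$ is an isomorphism and $q^*B = 2\Delta$. Degree $2$ ($f=2$) is the case $q^*D = \overline C$; for example a general line pulls back to a $(1,1)$-curve mapping $2:1$ onto it.

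Swap the labels back and everything goes through. The ramified case forces $\overline C = \Delta$, which is excluded by its class. You are then in the case $q^*D = \overline C$, with $D$ a plane quartic and $\overline C \to D$ of degree $2$, not birational. That is also what your Step 3 actually uses when both points of a pair go to one point of $D$. The rest of your proof is correct.
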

{\it Proof:} By contradiction, assume that such a morphism $p$ exists. The Corollary \ref{min} states that $\mathbb{P}^1 \times \mathbb{P}^1$ must contain a curve $\overline C \in |- 2 K_{\mathbb{P}^1 \times \mathbb{P}^1}| = |{\cal O}_{\mathbb{P}^1 \times \mathbb{P}^1} (4, 4)|$ which must be irreducible, invariant under the involution $(a, b) \to (b, a)$, and it must have the highest possible number of double points, which equals the algebraic genus $p_a (\overline C) = 9$.\\
The quotient of $\mathbb{P}^1 \times \mathbb{P}^1$ under the involution is given by the double cover $$q : \mathbb{P}^1 \times \mathbb{P}^1 \to \mathbb{P}^2,$$ $$q ([X, Y], [Z, W]) := (X Z, X W + Y Z, Y W).$$ The map $q$ is induced by the subsystem of sections of $|{\cal O}_{\mathbb{P}^1 \times \mathbb{P}^1} (1, 1)|$ which are symmetric with respect to the involution. Since $\overline C$ is invariant and it has class $|{\cal O}_{\mathbb{P}^1 \times \mathbb{P}^1} (4, 4)|$, $\overline C$ is the double cover of an irreducible quartic curve $Q \subset \mathbb{P}^2$. Since $\overline C$ is invariant under the involution, the set of the $9$ double points must be invariant too. By the Lemma \ref{crucial}, exactly one of the nodes belongs to the diagonal $\Delta \subset \mathbb{P}^1 \times \mathbb{P}^1$, while the other $8$ double points are displaced in $4$ pairs. The two points of each pair are switched by the involution. Since $q$ is a local isomorphism away from $\Delta$, it identifies the two nodes of each pair to a node of the curve $Q$. But an irreducible plane quartic curve cannot have $4$ singular points. $\square$

\begin{lemma}\label{nofour}
Assume that $(X, i_X)$ is a rational pair, with $X$ an unnodal Coble surface with irreducible boundary curve $C$. Then an equivariant morphism $p : (X, i_X) \to (\mathbb{P}^2, I)$, with $I$ a projective involution, does not exist.
\end{lemma}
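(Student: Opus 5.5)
We use the quotient-and-count strategy of Lemma \ref{nothree}, adapted to the isolated fixed point. Up to coordinates, a projective involution is $I(x,y,z) = (x,y,-z)$. Its fixed locus is the line $L = \{z = 0\}$ together with the isolated point $O = (0,0,1)$.

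Suppose $p : (X,i_X) \to (\mathbb{P}^2, I)$ exists. By Corollary \ref{min}, $p$ contracts $r = K_{\mathbb{P}^2}^2 + 1 = 10$ disjoint $(-1)$-curves over an $I$-invariant set $\Sigma = \{p_1,\dots,p_{10}\}$. It follows that $\mathbb{P}^2$ contains an irreducible, $I$-invariant, rational sextic $\overline C$ that is singular exactly at the points of $\Sigma$. The points of $\Sigma$ split into pairs exchanged by $I$, points on $L$, and possibly $O$. By Lemma \ref{crucial}, at most $2$ of them lie on $L$. We split into two cases according to whether $O \in \Sigma$.

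\emph{Case $O \in \Sigma$.} We factor $p$ through $Y' = Bl_O \mathbb{P}^2 = \mathbb{F}_1$, as announced in the introduction. Since $O$ is an isolated fixed point, the lifted involution fixes the exceptional curve $C_{-1}$ pointwise. Hence on $\mathbb{F}_1$ the fixed locus $C_{-1} \sqcup L$ is purely one-dimensional, and the quotient is smooth. Concretely, $q : \mathbb{F}_1 \to \mathbb{F}_2$ is the double cover branched on $C_{-2} + B'$ with $B' \sim C_{-2} + 2F$, and it satisfies $q^*F = F$ and $q^*C_{-2} = 2C_{-1}$.

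The strict transform $\overline C'$ of $\overline C$ has class $6(C_{-1}+F) - 2C_{-1} = 4C_{-1} + 6F$. We use, as in Lemma \ref{nothree}, that $\overline C' = q^{-1}(D)$. Then $D \sim 2C_{-2} + 6F$, and adjunction on $\mathbb{F}_2$ gives $D^2 = 16$, $K_{\mathbb{F}_2} D = -12$, so $p_a(D) = 3$.

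The remaining $9$ points of $\Sigma$ lie off the exceptional curve by Proposition \ref{blowdown}. They split as $f$ fixed points on $L$ plus $2k$ exchanged points, with $f \le 2$ and $f + 2k = 9$. This forces $f = 1$ and $k = 4$. Each pair maps to a node of the irreducible curve $D$, so $D$ has $4 > 3 = p_a(D)$ singular points, a contradiction. We must also treat the alternative in which $\overline C' \to D$ is birational. Then $q^*D = 2\overline C'$, so $D \sim 4C_{-2} + 12F$, and the bound has to be redone. There the ramification contributions along $B'$ and along $C_{-2}$ become important, since $\overline C'$ meets $C_{-1}$.

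\emph{Case $O \notin \Sigma$.} Now $f \in \{0,2\}$, so there are at least $4$ pairs. The quotient $\mathbb{P}^2/I$ is the quadric cone $\mathbb{P}(1,1,2)$, and $\overline C$ avoids the vertex. If $\overline C = q^{-1}(D)$, then $D$ is a cubic section of the cone, of arithmetic genus $4$. With $f = 0$ we get $5$ nodes on $D$, a contradiction. With $f = 2$ we get exactly $4$ nodes, so $D$ is rational.

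This tight subcase is the main obstacle. A direct count gives no contradiction: $D$ must be tangent to the branch conic at the two images of the fixed nodes, and it must meet it transversally at $2$ further points, which is consistent with $\overline C$ being rational. My first attempt would be to follow the paper's announced method instead. Pass to an intermediate model, for instance blowing up the pairs or the fixed points on $L$. Then exhibit on it a different minimal model without isolated fixed points, such as a conic bundle or $(\mathbb{P}^1 \times \mathbb{P}^1, \text{swap})$. That model is excluded by Lemma \ref{nothree} or by the analogous $\mathbb{F}_n$ count. Failing that, I would use unnodality: in this configuration the pencil of lines through $O$ and the two fixed nodes should produce a $(-2)$-curve on $X$, contradicting unnodality.
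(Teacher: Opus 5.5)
There is a genuine gap. The subcase $O\notin\Sigma$ with two fixed nodes on $L$ and four exchanged pairs is never closed. As you note yourself, no count on the quadric cone can close it, because there $D$ is a $4$-nodal cubic section of arithmetic genus $4$, which is perfectly consistent. What follows in your proposal is a list of possible strategies, not an argument. The unnodality fallback also does not work as stated: a line through $O$ and one fixed node is $I$-invariant, and its strict transform in $X$ has self-intersection $0$ unless the line also contains an exchanged pair.

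Separately, the ``birational alternative'' you leave open when $O\in\Sigma$ is vacuous. If $q|_{\overline C'}$ were birational, the involution would act trivially on $\overline C'$, forcing $\overline C'\subset C_{-1}\sqcup L$. So $q^*D=\overline C'$ always holds, and that case is actually complete.

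The missing idea is the paper's reduction, which handles every case at once and makes the quotient computations unnecessary. At most three of the ten points of $\Sigma$ are fixed ($O$ and at most two on $L$), so there is an exchanged pair $\{a,b\}$. The line $\ell=\overline{ab}$ is $I$-invariant, and it passes through $O$ since $\ell\neq L$. Hence $p$ factors through $Bl_{a,b}\mathbb{P}^2$, where the strict transform of $\ell$ is an invariant $(-1)$-curve. Contracting it gives $\mathbb{P}^1\times\mathbb{P}^1$. The induced involution there swaps the two rulings, namely the pencils of lines through $a$ and through $b$, so it is conjugate to $(u,v)\mapsto(v,u)$. The composite $X\to\mathbb{P}^1\times\mathbb{P}^1$ is then an equivariant morphism, which Lemma \ref{nothree} excludes.

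In your tight subcase you can see concretely why this works. By Bezout, $\ell$ contains no second pair. By unnodality, $\ell$ contains no fixed node $c_j$, since otherwise $\tilde\ell$ would be a $(-2)$-curve. So $\tilde\ell$, $E_{c_1}$, $E_{c_2}$ are three disjoint $i_X$-invariant $(-1)$-curves on which $i_X$ acts nontrivially, and the proof of Lemma \ref{crucial} applies verbatim. Equivalently, the point to which $\tilde\ell$ contracts becomes a third fixed double point on the diagonal. When $O\in\Sigma$ the same line contains three points of $\Sigma$, so its strict transform is already a $(-2)$-curve.
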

{\it Proof:} By contradiction, assume that such a morphism $p$ exists. The Corollary \ref{min} states that $\mathbb{P}^2$ must contain a curve $\overline C \in |- 2 K_{\mathbb{P}^2}| = |{\cal O}_{\mathbb{P}^2} (6)|$ which must be irreducible, invariant under the involution $I$, and it must have the highest possible number of double points, which equals the algebraic genus $p_a (\overline C) = 10$.\\
Since $\overline C$ is $I$-invariant, this set of double points is invariant too. We first note that the fixed locus of $I$ consist of a line $L_0 \subset \mathbb{P}^2$, and an isolated fixed point $p_0 \in \mathbb{P}^2 \setminus L_0$. There are three possibilities: $p_0$ could be one of the $10$ double points of the sextic $\overline C$, or a smooth point of $\overline C$, or it could not lie in $\overline C$ at all.\\
In all these situations, it is impossible that all the $10$ double points of $\overline C$ are fixed by $I$, otherwise at least $9$ of them (all but at most $p_0$) should lie on $L_0$. There are two reasons why this is impossible. The first one is that we would have an intersection product $\overline C L_0 \ge 18$, which contradicts Bezout's Theorem $\overline C L_0 = 6$. The second reason is given by the Lemma \ref{crucial}, which states that actually at most two singular points of $\overline C$ can lie on $L_0$.\\
Hence there exists at least one pair of nodes $a, b \in {\rm\, Sing\,} (\overline C)$ switched by $I$. Then the map $p : X \to \mathbb{P}^2$ factors through the blow - up $X \to Bl_{a, b} \mathbb{P}^2  \to \mathbb{P}^2.$ This blow up is equipped with a well - defined involution, induced by $I$, which preserves the $(-1)$-curve consisting on the strict transform of the line ${\overline {a, b}}$. The contraction of this curve leads us to a minimal model of $(X, i_X)$ consisting of $\mathbb{P}^1 \times \mathbb{P}^1$, equipped with the involution which exchanges the two factors. This is forbidden by the Lemma \ref{nothree}. $\square$\\

\begin{lemma}\label{notwo}
Assume that $(X, i_X)$ is a rational pair, with $X$ an unnodal Coble surface with irreducible boundary curve $C$. Then an equivariant morphism $p : (X, i_X) \to (Y, i_Y)$, with $(Y, i_Y)$ a minimal pair as in case $ii)$ of the Theorem \ref{BB}, does not exist.
\end{lemma}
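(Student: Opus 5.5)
We argue by contradiction, following the double-cover strategy used for Lemma \ref{nothree}. Suppose such a $p$ exists. In case $ii)$ the fixed locus $R$ of $i_Y$ is a smooth bisection, so it has pure dimension $1$ and $i_Y$ has no isolated fixed points. Hence the quotient $Z := Y/i_Y$ is smooth. Each fiber $F \simeq \mathbb{P}^1$ is preserved, and its quotient is again $\mathbb{P}^1$, so $Z$ is a $\mathbb{P}^1$-bundle over $\mathbb{P}^1$, i.e. $Z \simeq \mathbb{F}_n$. The quotient map $q : Y \to \mathbb{F}_n$ is a double cover branched along $B = q(R) \simeq R$, a smooth bisection of the ruling, as in Remark \ref{nonmin}. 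Let $k$ be the number of singular fibers of $|F|$. Since each singular fiber consists of two $(-1)$-curves, $Y$ is $\mathbb{F}_m$ blown up $k$ times and $K_Y^2 = 8 - k$. By Corollary \ref{min} we get $\overline C \in |-2K_Y|$, irreducible, rational, $i_Y$-invariant, with exactly $r = K_Y^2 + 1 = 9 - k$ double points, and this set of points is $i_Y$-invariant. Since $i_Y$ has no isolated fixed points, Lemma \ref{crucial} says at most $2$ of these points lie on $R$. So at least $7 - k$ of them, in fact an even number $2s \ge 7-k$, split into $s$ pairs switched by $i_Y$.

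Next I would compute the class of the quotient curve $\Gamma := q(\overline C) \subset \mathbb{F}_n$. Since $\overline C$ is invariant and $i_Y$ acts on $\overline C$ nontrivially (otherwise $\overline C \subset R$, impossible for a curve in $|-2K_Y|$ with many nodes), $\overline C = q^{-1}(\Gamma)$ and $q^*\Gamma = \overline C = -2K_Y$. The canonical formula gives $-2K_Y = q^*(-2K_{\mathbb{F}_n}) - 2R = q^*(-2K_{\mathbb{F}_n} - B)$. By injectivity of $q^*$, $\Gamma \equiv -2K_{\mathbb{F}_n} - B$. Writing $B \equiv 2C_{-n} + bF$ (a bisection), we get $\Gamma \equiv 2C_{-n} + (2n+4-b)F$, again a bisection class. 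Its arithmetic genus from adjunction is $p_a(\Gamma) = 1 + \tfrac12(\Gamma^2 + \Gamma K_{\mathbb{F}_n})$. Since $\Gamma$ is a bisection, this is a small number, and I expect it to be expressible in terms of $k$ through $B^2$. Riemann–Hurwitz for the hyperelliptic $R \to \mathbb{P}^1$ relates the number of branch points of $B \to \mathbb{P}^1$, i.e. the tangencies of $B$ with fibers, to the singular fibers of $Y$, which come from those fibers. This gives $k = B\cdot(B + K_{\mathbb{F}_n}) + 4 = 2g(B)+2$.

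Then comes the counting contradiction. The normalization of $\Gamma$ is the quotient of the normalization $C \simeq \mathbb{P}^1$, so it is rational. Each switched pair of nodes of $\overline C$ maps, via the local isomorphism $q$ off $R$, to one node of $\Gamma$. So $\Gamma$ has at least $s \ge \lceil (7-k)/2\rceil$ singular points, and also $\delta(\Gamma) \le p_a(\Gamma)$. I would show $p_a(\Gamma) < s$. Alternatively, I would compare genera directly: a bisection of $\mathbb{F}_n$ has at most $p_a$ double points, and the fixed nodes on $R$ contribute further singularities or tangencies of $\Gamma$ with $B$.

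The main obstacle is this last numerical step, namely getting $p_a(\Gamma)$ in terms of $k$ and checking $p_a(\Gamma) < s$ for every admissible $k$ and $n$. Small $k$ is delicate, as is the possibility that nodes on $R$ contribute nothing to $\Gamma$'s singularities. Here $k$ is even and $\ge 2$, so $k \in \{2,4,6,8\}$. If the count fails for some $k$, my fallback is the trick of Lemma \ref{nofour}. I would take a switched pair of nodes, blow it up, and find an invariant $(-1)$-curve configuration, e.g. the two components of a singular fiber through them, or the fiber through a pair, and contract it. This yields a different minimal model, of type $iii)$ or $iv)$, which is excluded by Lemmas \ref{nothree} and \ref{nofour}.
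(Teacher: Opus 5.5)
Your strategy works and differs from the paper's. However, you stopped just before the decisive computation. It closes uniformly, so the fallback via Lemmas \ref{nothree} and \ref{nofour} is not needed.

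With $B\equiv 2C_{-n}+bF$, adjunction on $\mathbb{F}_n$ gives $p_a(B)=b-n-1$, hence $k=2p_a(B)+2=2(b-n)$. For $\Gamma\equiv 2C_{-n}+(2n+4-b)F$ it gives $p_a(\Gamma)=n+3-b=3-k/2$, independently of $n$.

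On the other side, $r=9-k$ is odd and at most two of the double points lie on $R$. Since $r-t=2s$ is even, exactly one double point lies on $R$, and $s=(r-1)/2=4-k/2$. The $s$ switched pairs give $s$ distinct singular points of the irreducible curve $\Gamma$, because $q$ is a local isomorphism off $R$. Hence $p_a(\Gamma)\ge s$, i.e. $3-k/2\ge 4-k/2$. This is absurd for every $n$ and every even $k\le 8$, where $k\le 8$ because $r\ge 1$. For $k=8$ it just says $p_a(\Gamma)=-1<0$.

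Your assertion $k\ge 2$ is unjustified in the paper's formulation of case $ii)$. There $k=0$ is allowed: the fixed locus is two disjoint sections and $Y$ is a Hirzebruch surface, which is the paper's case $a=0$. The formulas above, with $p_a(B)=-1$, cover it as well.

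The paper proceeds differently.
\begin{itemize}
\item It first rules out $n>0$ by studying $q^*(C_{-n})$ in three subcases: irreducible, split into two switched curves, or a branch component. This uses Proposition \ref{negcurves}, unnodality and minimality.
\item It then works on $\mathbb{P}^1\times\mathbb{P}^1$ with $B\in|aF_1+2F_2|$ and $a\in\{0,2,4\}$, i.e. $k\in\{0,4,8\}$. For $a=0,2$ it uses the same node count on the quotient curve that you propose.
\item For $a=4$ it uses a separate argument: $|-K_Y|=|q^*F_2|$ moves, so some member passes through the blown-up point, contradicting $|-K_X|=\emptyset$.
\end{itemize}

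Your version buys uniformity. Since $p_a(\Gamma)$ depends only on $k$, you never need to identify the quotient surface, the whole analysis of $C_{-n}$ disappears, and $k=8$ is absorbed into the same count. The paper's route yields the extra information that the quotient must be $\mathbb{P}^1\times\mathbb{P}^1$, at the cost of a longer case analysis.
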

{\it Proof:} Assume the contrary: then we find a regular birational morphism $$p : X \to Y,$$ with $Y$ a smooth rational surface equipped with an involution $$i_Y : Y \to Y$$ such that $$p \circ i_X = i_Y \circ p.$$ Moreover, $Y$ admits a conic fibration $\pi_Y : Y \to \mathbb{P}^1$, whose fibers are all preserved by $i_Y$. We already saw in the Remark \ref{nonmin} that such an involution $i_Y$ arises as the deck involution of a double cover $q : Y \to \mathbb{F}_n$ on a Hirzebruch surface $\mathbb{F}_n$, branched over a smooth bisection $B \subset \mathbb{F}_n$. The singular fibers of $\pi_Y: Y \to \mathbb{P}^1$ correspond to the ramification locus of the restriction $B \to \mathbb{P}^1$. Putting all together, we have a diagram:
\[\begin{tikzcd}
X \arrow{r}{i_X} \arrow[swap]{d}{p} & X \arrow{d}{p} &
\\ Y  \arrow{r}{i_Y} & Y \arrow{r}{q} \arrow[swap]{dr}{\pi_Y} & \mathbb{F}_n \arrow{d}{}
\\  &  & \mathbb{P}^1
\end{tikzcd}
\]
We first want to exclude the case $n > 0$. Assume this actually happens, and consider the negative section $C_{- n} \subset \mathbb{F}_n$. If the pullback $q^* (C_{-n}) \subset Y$ is irreducible, then it is a curve of self intersection $$q^*(C_{-n})^2 = - 2 n \le -2.$$ By Proposition \ref{negcurves} and the unnodality assumption, the strict transform of $q^* (C_{-n})$ in $X$ must necessarily coincide with a component of the Coble curve $C$. Since $C$ is irreducible, we have $$p (C) = q^* (C_{- n})$$ But by the Proposition \ref{blowdown}, $Y$ is obtained by $X$ as a blow - down of $(-1)$-curves. Each $(-1)$-curve of $X$ intersects $C$ in two points, hence its contraction makes the self-intersection of $C$ jump by $4$, that is, $$p(C)^2 \ge C^2 + 4 = 0,$$ which is a contradiction.\\
Another contradiction arises if we assume that $q^* (C_{- n})$ actually splits as $$q^* (C_{- n}) = A + B$$ with $A, B$ curves switched by $i_Y$. In this case, we find the same self - intersection $$- 2 n =q^* (C_{-n})^2 = A^2 + B^2 + 2 A B = 2 A^2 + 2 A B.$$ This shows that $A, B$ are two distinct curves with negative self - intersection, hence $$A^2 = B^2 = - 1$$ by Proposition \ref{negcurves}. So the previous equality becomes $$- 2 n = 2 A B - 2$$ which is possible only if $A B = 0$ and $n = 1$. This again is a contradiction, since $i_Y$ should exchange two disjoint $(-1)$-curves, against the hypothesis of minimality.\\
The last case to exclude is that $C_{- n}$ might be a component of the branch locus $B \subset \mathbb{F}_n$ of $q$. Since $B$ is a bisection of $\mathbb{F}_n$, the other component $B - C_{- n}$ must be another section, disjoint from $C_{- n}$ (otherwise $Y$ would be a singular surface). The only curves with this property live in the linear system $C_{- n} + n F$, where $F$ is a fiber of the $\mathbb{P}^1$-bundle $|F| : \mathbb{F}_n \to \mathbb{P}^1$. Thus we find $$B = 2 C_{- n} + n F,$$ which forces $n$ to be even. Let $R \subset Y$ be the ramification component corresponding to $C_{- n}$, so that $$q^* (C_{- n}) = 2 R.$$ Taking squares, we have $$- 2 n = 4 R^2,$$ that is $$R^2 = - \frac{n}{2}$$ Now we can argue as before: if $n \ge 4$, then $R^2 \le -2$ and we still find $R = p (C)$, which is a contradiction, since $p(C)^2 \ge 0$. But if $n = 2$, we still have a contradiction, since $R^2 = - \frac{n}{2} = -1$ would give a $(-1)$-curve fixed by $i_Y$, against the assumption of minimality.\\
Finally, we have proved that $n = 0$, that is $$\mathbb{F}_n = \mathbb{F}_0 = \mathbb{P}^1 \times \mathbb{P}^1.$$ To avoid confusion, we denote by $F_1 \in \Pic (\mathbb{P}^1 \times \mathbb{P}^1)$ the ruling induced from $Y$, so that $q^* (F_1)$ is the conic bundle on $Y$ given by the Theorem \ref{BB}. Let $F_2$ be the other ruling. In this notation, the branch curve $B$ of $q$ lives in the linear system \begin{eqnarray}\label{branch} B \in |a F_1 + 2 F_2|\end{eqnarray} for some even number $a \ge 0$.\\
We now prove that the only admissible values for $a$ are $0, 2, 4$. Indeed, by the Proposition \ref{blowdown}, $X$ is a blow-up of $Y$ at a finite set, thus $$K_Y^2 \ge K_X^2 = -1.$$ Since $Y$ is rational, its Picard lattice is torsion free. Thus it makes sense to write the canonical formula as: $$K_Y = q^* K_{\mathbb{P}^1 \times \mathbb{P}^1} + \frac{1}{2} q^* B = q^* ((\frac{a}{2} - 2) F_1 - F_2).$$ Thus $$-1 \le K_Y^2 = 2 ((\frac{a}{2} - 2) F_1 - F_2)^2 = 8 - 2 a,$$ which gives $a \in \{0, 2, 4\}$, as required.\\
If $a = 0$ then $B$ is the disjoint union of two horizontal lines in $F_2$. This means that $Y = \mathbb{P}^1 \times \mathbb{P}^1$ too, with the involution $i_Y : \mathbb{P}^1 \times \mathbb{P}^1 \to \mathbb{P}^1 \times \mathbb{P}^1$ acting as the identity on the first component. In a suitable choice of coordinates $([X_0, X_1], [Y_0, Y_1])$ on $Y$, we have $$i_Y ([X_0, X_1], [Y_0, Y_1]) = ([X_0, X_1], [Y_0, - Y_1])$$ and \begin{eqnarray}\label{quoz} q([X_0, X_1], [Y_0, Y_1]) = ([X_0, X_1], [Y_0^2, Y_1^2]). \end{eqnarray} We saw in the Corollary \ref{min} that the image $\overline C := p(C)$ must live in the linear system $$\overline C \in | - 2 K_{\mathbb{P}^1 \times \mathbb{P}^1}| = |{\cal O}_{\mathbb{P}^1 \times \mathbb{P}^1}(4, 4)|.$$ An irreducible member of this system has algebraic genus $9$, as it is confirmed by the difference $$K_Y^2 - K_X^2 = 9.$$ Hence $Y$ comes from the contraction via $p$ of $9$ $(-1)$-curves of $X$ over $9$ nodes of $\overline C$, and these points must be set - theoretically preserved by $i_Y$. Denote by $s$ the number of pairs of nodes which are switched by $i_Y$, and by $t$ the number of nodes fixed by $i_Y$, with $$2 s + t = 9.$$ By the Lemma \ref{crucial}, we must have $t < 3$, so that $$s = 4, \ t = 1$$ is the only possible solution. 
Now let: $$D := q(\overline C) \subset \mathbb{P}^1 \times \mathbb{P}^1$$ be the set - theoretic image of $p(C)$ in the final $\mathbb{P}^1 \times \mathbb{P}^1$, with the reduced scheme structure. Clearly we have $$\overline C = q^* (D).$$
But the expression \ref{quoz} shows that the induced pull - back $q^*$ acts as $F_1 \to F_1, F_2 \to 2 F_2$, so that $$q^* (D) = \overline C = 4 F_1 + 4 F_2 = q^*(4 F_1 + 2 F_2),$$ and hence $$D \in |4 F_1 + 2 F_2|.$$
The unique fixed double point of $\overline C$ corresponds to a tangency point between $D$ and the branch locus $B$, while the $4$ pairs correspond to $4$ actual nodes of $D$. But an irreducible curve $D \in |4 F_1 + 2 F_2|$ has algebraic genus $3$, hence it cannot admit $4$ nodes.\\
If $a = 2$, then $q : Y \to \mathbb{P}^1 \times \mathbb{P}^1$ is branched over a curve of type $B \in |2 F_1 + 2 F_2|$, and we can compute $K_Y$ as $$K_Y = q^* (K_{\mathbb{P}^1 \times \mathbb{P}^1}) + \frac{1}{2} q^*(B) = q^*(- F_1 - F_2).$$
Note that $$K_Y^2 = 4,$$ so by the Corollary \ref{min} $Y$ is obtained from $X$ via the contraction of $K_Y^2 + 1 = 5$ $(-1)$-curves, which correspond to $5$ double points of $\overline C$. As before, assume that $s$ pairs of these points are switched by $i_Y$, while the remaining $t$ are fixed, with $$2 s + t = 5.$$ Since there are not isolated fixed points in $Y$, the Lemma \ref{crucial} gives again $t < 3$, that is $$s = 2, \ t = 1$$ necessarily. Again, this means that $q(\overline C)$ has $2$ nodes. Since $\overline C$ is $i_Y$-invariant and irreducible, we have $$\overline C = q^* D$$ for some effective irreducible divisor $D \subset \mathbb{P}^1 \times \mathbb{P}^1$. Thus: $$q^* (D) = \overline C = - 2 K_Y = q^* (2 F_1 + 2 F_2)$$ so that $$D \in |2 F_1 + 2 F_2|.$$ But an irreducible curve in this linear system has algebraic genus $1$, so it can have at most one node, contradicting $s = 2$.\\
There is only one case left, that is the number $a$ in \ref{branch} equals $4$, so that $B = 4 F_1 + 2 F_2$. Again, we compute can compute $K_Y$ as $$K_Y = q^* (K_{\mathbb{P}^1 \times \mathbb{P}^1}) + \frac{1}{2} q^*(B) = q^*(- F_2).$$ Hence the canonical has square $$K_Y^2 = 0$$ and hence $Y$ is obtained from $X$ via the contraction of only one $(-1)$-curve $E \subset X$ to a point $p \in Y$. But $|F_2|$ can move in $\mathbb{P}^1 \times \mathbb{P}^1$, and hence $|- K_Y| = |q^* (F_2)|$ can move in $Y$. But then one member of $|- K_Y|$ passes through $p$, which means that $|- p^* K_Y - E|$ is non empty. This contradicts the very Coble assumption $|- K_X| = \emptyset$. $\square$\\

\begin{lemma}\label{noone}
Assume that $(X, i_X)$ is a rational pair, with $X$ an unnodal Coble surface with irreducible boundary curve $C$. Then an equivariant morphism $p : (X, i_X) \to (\mathbb{F}_n, i')$, with $(\mathbb{F}_n, i')$ a minimal pair supported on a Hirzebruch surface $\mathbb{F_n}$, does not exist.
\end{lemma}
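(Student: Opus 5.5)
We follow the strategy of Corollary \ref{min}: we assume an equivariant morphism $p : (X, i_X) \to (\mathbb{F}_n, i')$ exists and reach a contradiction, splitting into $n \ge 2$ and $n = 0$. The case $n = 1$ does not arise, since by Theorem \ref{BB} the pair $(\mathbb{F}_1, i')$ is not minimal.

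\emph{Case $n \ge 2$.} The negative section $C_{-n}$ is the unique irreducible curve of negative self-intersection, so it is $i'$-invariant. By Proposition \ref{blowdown}, $p$ is the blow-up of finitely many points of $\mathbb{F}_n$. Hence the strict transform $\widetilde{C}_{-n} \subset X$ is a smooth rational curve with $\widetilde{C}_{-n}^2 \le -n \le -2$. By Proposition \ref{negcurves} and unnodality, $\widetilde{C}_{-n}$ must be a component of the boundary, so $\widetilde{C}_{-n} = C$ by irreducibility. Then $p(C) = C_{-n}$. On the other hand, Corollary \ref{min} gives $p(C) = \overline C \in |-2K_{\mathbb{F}_n}|$. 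This is impossible for two reasons: $\overline C \cdot F = 4$ while $C_{-n} \cdot F = 1$, and $\overline C^2 = 4K_{\mathbb{F}_n}^2 = 32 > 0$. So no such $p$ exists.

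\emph{Case $n = 0$.} Here $Y = \mathbb{P}^1 \times \mathbb{P}^1$ and $i'$ preserves the first ruling, acting on its base by a non-trivial involution $\tau$. So in suitable coordinates $i' = (\tau, \sigma)$, where $\sigma$ is either the identity or a non-trivial involution of the second factor. As in Lemma \ref{nothree}, $\overline C \in |{\cal O}(4,4)|$ is irreducible with $9$ double points. We write $2s + t = 9$, where $s$ is the number of pairs of nodes switched by $i'$ and $t$ the number of fixed nodes. Since $t$ is odd, there is at least one fixed node.

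\emph{Subcase $\sigma = \mathbb{1}$.} The fixed locus is two fibres of the first ruling, so every fixed node is non-isolated and Lemma \ref{crucial} gives $t \le 2$. Hence $t = 1$ and $s = 4$. The quotient is $q : \mathbb{P}^1 \times \mathbb{P}^1 \to \mathbb{P}^1 \times \mathbb{P}^1$, $(x, y) \mapsto (x^2, y)$, and $\overline C = q^* D$ with $D \in |2F_1 + 4F_2|$ irreducible, where $F_1$ denotes the first ruling. This is the computation of the case $a = 0$ in Lemma \ref{notwo} with the factors exchanged. Then $D$ has arithmetic genus $3$, yet the $4$ switched pairs would give it $4$ nodes, a contradiction.

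\emph{Subcase $\sigma \ne \mathbb{1}$.} Now $i'$ has exactly $4$ fixed points, all isolated, so the quotient is singular. We use the factoring trick instead. Let $c$ be a fixed node, which is isolated. Blow up $c$, obtaining $X \to Bl_c(\mathbb{P}^1 \times \mathbb{P}^1) \to \mathbb{P}^1 \times \mathbb{P}^1$. Each ruling is preserved by $i'$, so the two fibres through $c$ are both $i'$-invariant. Their strict transforms in $Bl_c(\mathbb{P}^1 \times \mathbb{P}^1)$ are disjoint, invariant $(-1)$-curves. Contracting both gives an equivariant morphism onto $\mathbb{P}^2$ with a projective involution, and composing yields an equivariant morphism $X \to \mathbb{P}^2$. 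This is forbidden by Lemma \ref{nofour}.

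The main obstacle is this last subcase, where the double-cover genus count is unavailable. The key points to verify are that the blow-up at $c$ factors $p$ (true because $c$ is one of the blown-up nodes) and that the two fibre transforms are disjoint $(-1)$-curves whose contraction produces $\mathbb{P}^2$ equivariantly. If one prefers not to rely on invariance of both rulings, an alternative is to blow up a switched pair $a, b$ (note $s \ge 1$) and contract the strict transforms of the fibres through $a$ and $b$. This would reduce to a different pair on $\mathbb{P}^1 \times \mathbb{P}^1$, but it looks less clean.
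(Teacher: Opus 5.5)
Your proof is correct and follows the paper's argument: you exclude $n=1$ by non-minimality, force $p(C)=C_{-n}$ for $n\ge 2$ via Proposition \ref{negcurves} and unnodality, and for $n=0$ you treat $\sigma=\mathbb{1}$ by the quotient genus count of Lemma \ref{notwo} and $\sigma\neq\mathbb{1}$ by blowing up a fixed node and contracting the two invariant fibres to reach $\mathbb{P}^2$, which Lemma \ref{nofour} forbids. The only deviations are cosmetic: for $n\ge 2$ you get the contradiction numerically ($\overline C\cdot F=4\neq 1=C_{-n}\cdot F$) rather than from the smoothness of $C_{-n}$, and you write out the $\sigma=\mathbb{1}$ subcase and the parity argument for a fixed node instead of citing them.
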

{\it Proof:} Assume the opposite, so that we are in case $i)$ of Theorem \ref{BB}: then there exists a commutative diagram:
\[\begin{tikzcd}
X \arrow{r}{i_X} \arrow[swap]{d}{p} & X \arrow{d}{p} 
\\ \mathbb{F}_n  \arrow{r}{i'}  \arrow[swap]{d}{f} & \mathbb{F}_n \arrow{d}{f}
\\  \mathbb{P}^1 \arrow{r}{\tau} & \mathbb{P}^1
\end{tikzcd}
\]
where $\mathbb{F}_n$ is a Hirzebruch surface, with its $\mathbb{P}^1$-bundle $f : \mathbb{F}_n \to \mathbb{P}^1$, $p$ is a regular birational morphism, and $\tau$ is a nontrivial involution on $\mathbb{P}^1$.\\
Again, we start excluding the case $n > 0$: note that $n = 1$ is impossible, since $(\mathbb{F}_1, i')$ is never a minimal pair. Hence $n \ge 2$, but then the negative curve $C_{- n} \subset \mathbb{F}_n$ has a strict transform in $X$ with self intersection lesser or equal than $-2$. Again, by Proposition \ref{negcurves}, this strict transform must coincide with the anti - bicanonical curve $C$, so that $p (C) = C_{-n}$. But this is impossible, since $p: X \to \mathbb{F}_n$ contracts $(-1)$-curves, and hence $C_{-n}$ should be singular, which is false.\\
This gives $n = 0$, so we have $$i' : \mathbb{P}^1 \times \mathbb{P}^1 \to \mathbb{P}^1 \times \mathbb{P}^1.$$ By the Theorem \ref{BB}, the involution $i'$ preserves the linear system given by the first ruling $|F_1|$, and it acts on it as the involution $\tau$ of the previous diagram. But then $i'$ must preserve also the second ruling $|F_2|$, so let denote by $\sigma$ the action of $i': \mathbb{P}^1 \to \mathbb{P}^1$ on the second coordinate of the product. We already saw during the proof of the Lemma \ref{notwo} how to deal with the case $\sigma = \mathbb{1}_{\mathbb{P}^1}$, so we can assume $\sigma \ne \mathbb{1}_{\mathbb{P}^1}$. In a suitable choice of coordinates, we can write \begin{eqnarray} \label{quasihor} i' ([X_0, X_1], [Y_0, Y_1]) = ([X_0, - X_1], [Y_0, - Y_1])\end{eqnarray} It has four fixed points, namely $([1, 0], [1, 0]), ([1, 0], [0, 1]), ([0, 1], [1, 0]), ([0, 1], [0, 1])$. Moreover, by the Corollary \ref{min}, the image $p (C)$ is an irreducible member of the linear system \begin{eqnarray} \label{quasihordue} p (C) \in |{\cal O}_{\mathbb{P}^1 \times \mathbb{P}^1} (4, 4)|\end{eqnarray} and hence it must have $9$ double points. This set of points is preserved by $i'$, hence at least one of them must coincide with one of the four fixed points. Assume for example that this node is $a = ([1, 0], [1, 0])$. Then the map $p : X \to \mathbb{P}^1 \times \mathbb{P}^1$ factors through the blow - up $X \to Bl_a (\mathbb{P}^1 \times \mathbb{P}^1) \to \mathbb{P}^1 \times \mathbb{P}^1$. The involution induced by $i'$ on this blow - up preserves both the strict transforms of the two fibers $\{X_1 = 0\}, \{Y_1 = 0\}$ containing $a$. Hence we can contract these $(-1)$-curves, and this produces another minimal model:
\[\begin{tikzcd}
X \arrow{r}{i_X} \arrow[swap]{d}{p'} & X \arrow{d}{p'} 
\\  \mathbb{P}^2 \arrow{r}{I} & \mathbb{P}^2
\end{tikzcd}
\]
where $I$ is a projective involution. This is forbidden by the Lemma \ref{nofour}. $\square$\\

\begin{lemma}\label{nofive}
Assume that $(X, i_X)$ is a rational pair, with $X$ an unnodal Coble surface with irreducible boundary curve $C$. Then an equivariant morphism $p : (X, i_X) \to (Y, i_Y)$, with $i_Y$ the Geiser involution on a Del Pezzo surface $Y$ of degree $2$, does not exist.
\end{lemma}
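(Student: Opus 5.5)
We argue by contradiction, using the double cover strategy already applied in the Lemma \ref{nothree} and the Lemma \ref{notwo}. Suppose $p : (X, i_X) \to (Y, i_Y)$ exists, with $Y$ a Del Pezzo surface of degree $2$ and $i_Y$ the Geiser involution. The anticanonical system $|-K_Y|$ defines a double cover $q : Y \to \mathbb{P}^2$ branched along a smooth plane quartic $B$, and $i_Y$ is the deck involution. Its fixed locus is the ramification curve $R \simeq B$, which has pure dimension $1$, so $Y$ has no isolated fixed points and the quotient $\mathbb{P}^2$ is smooth.

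By the Corollary \ref{min}, $r = K_Y^2 + 1 = 3$ disjoint $(-1)$-curves are contracted onto three singular points of an irreducible, $i_Y$-invariant curve $\overline C \in |-2K_Y|$. Since $K_Y^2 = 2$, this curve has arithmetic genus $3$, and its normalization $C$ is rational. Let $s$ be the number of pairs of these points switched by $i_Y$ and $t$ the number of fixed ones, so that $2s + t = 3$. The Lemma \ref{crucial} gives $t \le 2$, and all fixed points are non-isolated. Hence $s = 1$ and $t = 1$: there is a pair of double points $a, b = i_Y(a)$, plus one double point $c \in R$.

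Next we describe $\overline C$ as a pullback. Since $\overline C$ is irreducible and invariant, either $\overline C = q^* D$ for an irreducible plane curve $D$, or $\overline C$ maps birationally onto its image. In the first case $q^* \mathcal{O}(1) = -K_Y$ gives $D \in |\mathcal{O}_{\mathbb{P}^2}(2)|$, an irreducible conic. The second case should be ruled out by invariance. If $q|_{\overline C}$ were birational onto its image $D'$, then $q^*D' = \overline C + i_Y(\overline C) = 2\overline C$ would have class $-4K_Y$, forcing $\overline C$ to be a component of the ramification. But $R \in |-K_Y|$, which is the wrong class. So $\overline C = q^*D$ with $D$ a conic.

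The main obstacle is then the local analysis. The pair $a, b$ lies off $R$, where $q$ is a local isomorphism, so it maps to a singular point of $D$. A smooth conic has no singular points, and an irreducible conic is smooth, so we already have a contradiction. As a consistency check, the fixed double point $c$ would correspond to a tangency of $D$ with $B$, exactly as in the Lemma \ref{notwo}. The point that needs the most care is the claim that the invariant irreducible $\overline C$ is a full pullback $q^*D$, which we justify using the class computation $\overline C \in |q^*\mathcal{O}(2)|$ above. Once that holds, the count of nodes against the genus of $D$ (here $0$) concludes the proof.
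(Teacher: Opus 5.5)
Your approach is valid and differs from the paper's, but one step rests on a false claim. The ramification curve is not in $|-K_Y|$. Since $q^*B = 2R$ with $B$ a quartic and $\Pic(Y)$ is torsion-free, $R \in |q^*{\cal O}_{\mathbb{P}^2}(2)| = |-2K_Y|$, which is exactly the class of $\overline C$. So the class computation cannot rule out the case where $\overline C$ maps birationally onto its image, i.e.\ $\overline C = R$. Your closing remark, that the pullback claim is justified by $\overline C \in |q^*{\cal O}_{\mathbb{P}^2}(2)|$, has the same flaw.

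The repair uses facts you already have. The curve $\overline C$ contains the point $a$ with $i_Y(a) = b \neq a$, so $\overline C \not\subset R$. Alternatively, $R \simeq B$ is smooth of genus $3$, while $\overline C$ is singular with rational normalization. Either way $i_Y$ acts non-trivially on $\overline C$, so $q|_{\overline C}$ has degree $2$ and $\overline C = q^*D$ with $D$ an irreducible, hence smooth, conic. Your local-isomorphism argument at $a \notin R$ then gives a valid contradiction.

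The paper's route is different and shorter. After getting $s = t = 1$ from Lemma \ref{crucial}, it never analyzes $\overline C$. It takes the line through $q(p_1)$ and $q(p_2) = q(p_3)$; its pullback is a member of $|-K_Y|$ through $p_1, p_2, p_3$. This makes $-K_X = p^*(-K_Y) - E_1 - E_2 - E_3$ effective, contradicting $|-K_X| = \emptyset$. That argument uses only the Coble condition on $|-K_X|$ and the fact that $|-K_Y|$ is pulled back from lines, as in the case $a = 4$ of Lemma \ref{notwo}.

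Your argument instead follows the quotient-curve strategy of Lemmas \ref{nothree} and \ref{notwo}, and it gives more information. Every irreducible $i_Y$-invariant member of $|-2K_Y|$ is the double cover of a smooth conic, so all its singular points lie on $R$ as tangency points with $B$. Thus $s = 0$ automatically, and Lemma \ref{crucial} is only needed to exclude $t = 3$. This matches the first counterexample of Section 3, where all three double points are fixed and $X$ is nodal.
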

{\it Proof:} By the Corollary \ref{min}, $p$ is the contraction of $K_Y^2 + 1 = 3$ $(-1)$-curves $E_1, E_2, E_3$ of $X$ on three points $p_1, p_2, p_3 \in Y$, which are double points for a curve $\overline C \in |- 2 K_Y|$ preserved by the Geiser involution. By the Lemma \ref{crucial}, only one of these points, say $p_1$, is fixed by $i_Y$, while $p_2, p_3$ are switched. We recall that the Geiser involution $i_Y$ is associated to a double cover $q : Y \to \mathbb{P}^2$ branched over a smooth quartic curve, which is induced by the complete linear system $|- K_Y|$. Since $i_Y$ switches $p_2, p_3$, they have the same image $a = q(p_2) = q(p_3) \in \mathbb{P}^2$. The line through $a$ and $q (p_1)$ defines a section of $|- K_Y|$ through all the three points $p_1, p_2, p_3$. This means that $|- p^* K_Y - E_1 - E_2 - E_3|$ is non empty, against the very Coble assumption $|- K_X| = \emptyset$. $\square$\\\\
The Theorem \ref{main} is now a combined consequence of the Lemmas \ref{nothree}, \ref{nofour}, \ref{notwo}, \ref{noone}, \ref{nofive}.
\section{Counterexamples}
In this section, we will construct some counter-examples to Lemmas \ref{nothree}, \ref{nofour}, \ref{notwo}, \ref{noone}, \ref{nofive}, building Coble surfaces which either admit reducible boundary, or possess $(-2)$-curves, and which are equipped with some of the involutions forbidden by these $5$ Lemmas.\\
In all our constructions, we will need the following Remark:
\begin{remark}\label{easy}
Assume that a rational surface $X$ admits a collection of curves $C_1, \ldots, C_n \subset X$ such that $C_i C_j = - 4 \delta_{i, j}, C_i \simeq \mathbb{P}^1$ and $C_1 + \cdots + C_n = - 2 K_X$. Then $X$ is a Coble surface.\\
Indeed, we have the short exact sequence $$0 \to {\cal O}_X \to {\cal O}_X (C_1 + \cdots + C_n) \to \bigoplus_{i = 1}^n {\cal O}_{C_i} (C_1 + \cdots + C_n) \to 0.$$ By assumption the last term equals $$\bigoplus_{i = 1}^n {\cal O}_{C_i} (C_1 + \cdots + C_n) = \bigoplus_{i = 1}^n {\cal O}_{\mathbb{P}^1} (- 4),$$ which has no sections. Thus there exists an isomorphism $$H^0 ({\cal O}_X (C_1 + \cdots + C_n)) \simeq H^0 ({\cal O}_X) \simeq \mathbb{C},$$ that is $$|- 2 K_X| = \{C_1 + \cdots + C_n\}.$$ Moreover, the linear system $|- K_X|$ must be empty: an effective curve ${\cal E} \in |- K_X|$ would induce a non reduced curve $2 {\cal E} \in |- 2 K_X|$, which is impossible since $|- 2 K_X|$ cannot move by the previous proof.
\end{remark}
\begin{example}
Let $Q \subset \mathbb{P}^2$ be a smooth irreducible quartic curve, and let $C'' \subset \mathbb{P}^2$ a smooth irreducible conic. Assume that the intersection $Q \cap C$ takes the form $$Q \cap C'' = 2 q_1 + 2 q_2 + 2 q_3 + a + b,$$ with the $q_i$'s, $a, b$ five distinct points. Let $$q : Y \to \mathbb{P}^2$$ be the double cover branched over $Q$. Then $Y$ is a Del Pezzo surface, with the Geiser involution $i_Y : Y \to Y$. Since the points $a, b$ are distinct, the preimage $C' := q^* (C'')$ is irreducible, and since $C''$ is tangent at $Q$ at the points $q_i$'s, the curve $C'$ has three double points $p_1, p_2, p_3 \in Y$. Note that the $p_i$'s lie along the ramification curve $R \subset Y$. Hence the blow up $$X := Bl_{p_1, p_2, p_3} Y$$ admits a well defined involution $i_X : X \to X$, which preserves the exceptional divisors $E_1, E_2, E_3 \subset X$ over the $p_i$'s. Moreover, the surface $X$ is a Coble surface with irreducible boundary.\\
Indeed, the canonical system $K_Y$ equals $$K_Y = q^* (K_{\mathbb{P}^2}) + \frac{1}{2} q^* (Q) = q^* (- L).$$ This implies $$C' = q^* (C) = q^* (2 L) = - 2 K_Y.$$ Since the $p_i$'s are double points for $C'$, the strict transform $C$ of $C'$ has class $$C = C' - 2 E_1 - 2 E_2 - 2 E_3 = -2 (K_Y + E_1 + E_2 + E_3) = - 2 K_X.$$ Moreover, the canonical $K_Y$ has square $$K_Y^2 = q^* (- L)^2 = 2,$$ and thus $$K_X^2 = K_Y^2 - 3 = - 1.$$ This shows $$C^2 = 4 K_X^2 = -4.$$ By the Remark \ref{easy}, $X$ is a Coble surface.\\
There is no contradiction with the Lemma \ref{nofive}, since we we can exhibit $(-2)$-curves inside $X$. Let $$L_i \subset \mathbb{P}^2$$ be the lines which are tangent to the quartic $Q$ at the point $q_i$, and let $${\cal E}_i := q^* L_i \subset Y$$ be the pre image in $Y$. The double cover ${\cal E}_i \to L_i$ has $4$ branch points, counted with multiplicity, thus ${\cal E}_i$ has arithmetic genus $1$. Since $L_i$ is tangent to $Q$, its double cover ${\cal E}_i$ has a node at $p_i$, so that ${\cal E}_1$ is a rational curve. Note that $${\cal E}_i^2 = q^* (L_i)^2 = 2.$$ When we blow up the $p_i$'s to get the Coble surface $X$, the singularity of ${\cal E}_i$ at $p_i$is resolved, thus the strict transform $R_i \subset X$ is a copy of $\mathbb{P}^1$. Moreover, the class of $R_i$ equals $$R_i = {\cal E}_i - 2 E_i,$$ so that $$R_i^2 = {\cal E}_i^2 - 4 = -2.$$
\end{example}
\begin{example}
Assume that $Q \subset \mathbb{P}^2$ is an irreducible quartic curve with three nodes $q_1, q_2, q_3$, and assume that $B \subset \mathbb{P}^2$ is an irreducible conic which has three tangency points $x_1, x_2, x_3$ with $Q$, with $q_i \ne x_j$ for all indices $i, j$. Let $q : \mathbb{P}^1 \times \mathbb{P}^1 \to \mathbb{P}^2$ be the double cover branched over $B$, and let $Q' \subset \mathbb{P}^1 \times \mathbb{P}^1$ be the pre-image of $Q$. The curve $Q'$ belongs to the linear system $$Q' \in |{\cal O}_{\mathbb{P}^1 \times \mathbb{P}^1} (4, 4)| = |- 2 K_{\mathbb{P}^1 \times \mathbb{P}^1}|,$$ thus it has arithmetic genus $1 + K_{\mathbb{P}^1 \times \mathbb{P}^1}^2 = 9$. But $Q'$ has also $9$ double points: indeed, let $p_i, \overline p_i$ be the pre - images of $q_i$, for $i = 1, 2, 3,$ and let $\hat p_i$ be the pre-images of the $x_i$'s. Since the $q_i$'s are double points for $Q$, the points $p_i, \overline p_i$ are double points for $Q'$, and since the $x_i$'s are tangency points between $B$ and $Q$, the $\hat p_i$'s are double points for $Q'$ too. The cover involution $i : \mathbb{P}^1 \times \mathbb{P}^1 \to \mathbb{P}^1 \times \mathbb{P}^1$ swaps the coordinates, it preserves $Q'$, and thus it also preserves the set of its $9$ singular points.\\
 Let $p : X \to \mathbb{P}^1 \times \mathbb{P}^1$ be the blow up of these points, with exceptional divisors $E_i$'s above the $p_i$'s, $\overline E_i$'s above the $\overline p_i$'s, and $\hat E_i$'s above the $\hat p_i$'s. Then the strict transform $C$ of $Q'$ has class $$C = p^* Q' - 2 \sum_i E_i + \overline E_i + \hat E_i = - 2 K_X.$$ We already noted that $Q'$ is a rational curve, thus $C \simeq \mathbb{P}^1$. Finally, the self intersection of $Q'$ equals $$(Q')^2 = (- 2 K_{\mathbb{P}^1 \times \mathbb{P}^1})^2 = 32,$$ thus $$C^2 = 32 + 4 \sum_i E_i^2 + \overline E_i^2 + \hat E_i^2 = 32 - 36 = -4.$$ By Remark \ref{easy}, $X$ is a Coble surface with irreducible boundary curve $C$, with a well - defined involution induced by $(\mathbb{P}^1 \times \mathbb{P}^1, (a, b) \to (b, a))$.\\
This does not contradict the Lemma \ref{nothree}, since $X$ has $(-2)$-curves. Indeed, let $L \subset \mathbb{P}^2$ be the line through $q_1, q_2$. Its preimage $q^* L$ is a smooth curve of type $|{\cal O}_{\mathbb{P}^1 \times \mathbb{P}^1} (1, 1)|$, and it passes through $p_1, p_2, \overline p_1, \overline p_2$. The self intersection of $q^*L$ equals $2 L^2 = 2$. Its strict transform $R \subset X$ has class $$R = p^*(q^* L) - E_1 - E_2 - \overline E_1 - \overline E_2,$$ thus $$R^2 = 2 - 4 = -2.$$ Since $q^* L$ has class $|{\cal O}_{\mathbb{P}^1 \times \mathbb{P}^1} (1, 1)|$, it is isomorphic to $\mathbb{P}^1$, and thus $R$ is isomorphic to $\mathbb{P}^1$ too. Clearly, the same is true picking $L$ as the line through $q_1, q_3$ or $q_2, q_3$.\\
Now consider the point $y_1$, which belongs to the diagonal $\Delta \subset \mathbb{P}^1 \times \mathbb{P}^1$. Let $A \in |{\cal O}_{\mathbb{P}^1 \times \mathbb{P}^1} (1, 0)|, B \in |{\cal O}_{\mathbb{P}^1 \times \mathbb{P}^1} (0, 1)|$ be the two fibers through $y_1$. In the surface $X$, the strict transforms $A', B'$ of $A, B$ are $(-1)$-curves switched by the involution. The set of curves ${E_1, E_2, E_3, \overline E_1, \overline E_2, \overline E_3, A', B', \hat E_2, \hat E_3}$ is invariant with respect to this involution, and its contraction produces another minimal model, namely $\mathbb{P}^2$ equipped with a projective involution. The presence of $(-2)$-curves in $X$ ensures that this does not contradict the Lemma \ref{nofour}.\\
Finally, if $\Delta \subset \mathbb{P}^1 \times \mathbb{P}^1$ denotes the diagonal, then its strict transform $\tilde \Delta \subset X$ is a $(-1)$-curve, since it is obtained from $\Delta$ via the blow up of $y_1, y_2, y_3$. The set of curves ${E_1, E_2, E_3, \overline E_1, \overline E_2, \overline E_3, A', B', \tilde \Delta}$ is invariant with respect to the involution on $X$, and its contraction is the minimal pair $(\mathbb{P}^1 \times \mathbb{P}^1, [X, Y],[Z, W] \to [X, -Y], [Z, -W])$, which is a particular case of case $i)$ of the Theorem \ref{BB}.
\end{example}

\begin{example}
Let $\overline Q \subset \mathbb{P}^2$ be an irreducible quartic curve with a node at a point $p_0$, and let $Q \to \overline Q$ be its normalization. Then $Q$ is a smooth curve of genus $2$, thus its canonical divisor $K_Q$ defines a $g_2^1$ with $6$ ramification points. Assume that the two points $a, b \in Q$ lying over $p_0$ are not ramification points, and assume also that $a + b \ne K_Q$. Assume also that there exists a quadric curve $C \subset \mathbb{P}^2$ which is tri - tangent to $\overline Q$, that is, there are three smooth points of $\overline Q$ where the two curves are tangent, while the remaining two intersection points are distinct. Let $\mathbb{F}_1$ be the first Hirzebruch surface, obtained blowing up $\mathbb{P}^2$ at $p_0$. Then there are natural embeddings $Q, C \subset \mathbb{F}_1$. Let $Y \to \mathbb{F}_1$ be the double cover branched over $Q$. Note that $Q$ is a bisection of $\mathbb{F}_1 \to \mathbb{P}^1$, thus the preimage of a generic fiber $F$ of $\mathbb{F}_1$ is a smooth rational curve. In particular, $Y$ admits a conic fibration over $\mathbb{P}^1$, thus it is a rational surface. Let $L, E$ be the generators of $Pic (\mathbb{F}_1)$, where $L$ is the pullback of a line in $\mathbb{P}^2$, while $E$ is the exceptional divisor over $p_0$. The canonical formula for $K_Y$ gives $$K_Y = q^* (K_{\mathbb{F}_1}) + \frac{1}{2} q^* (Q) = q^* (- L).$$ Meanwhile, the class of $C$ in $\mathbb{F}_1$ is just $2 L$, thus the preimage $\overline C := q^* C$ has class $$\overline C = q^* C = q^* (2 L) = - 2 K_Y.$$ Since $C$ is tritangent to $Q$, $\overline C$ has three double points along the ramification curve of $q$. Let $X$ be the blow up of $Y$ at these $3$ points. The cover involution $i_Y : Y \to Y$ belongs to the Case $ii)$ of the Theorem \ref{BB}, and it lifts to an involution $i_X : X \to X$. Again, $X$ is a Coble surface, with boundary curve given by the strict transform of $\overline C$. This does not contradict the Lemma \ref{notwo}, since $X$ admits $(-2)$-curves. Indeed, let $$R := q^* E \subset Y.$$ Since $\overline Q$ has a double point at $p_0$, $Q$ and $E$ meet at $2$ points, and thus $R$ is still a smooth rational curve, with self intersection $$R = q^* (E^2) = -2.$$ Note that this remains true also in the Coble surface $X$, since we are assuming that the points of tangency between $Q, C$ do not lie on $E$. Thus $R$ is identified with its preimage in $X$, which is still a $(-2)$-curve.
\end{example}

\begin{example}
Consider $\mathbb{P}^1 \times \mathbb{P}^1$, with the involution $i (a, b) := (b, a)$. Let $A_1, A_2, A_3, A_4 \in |{\cal O}_{\mathbb{P}^1 \times \mathbb{P}^1} (1, 0)|$ be four distinct vertical fibers, and let $$B_k := i (A_k)$$ be the corresponding horizontal fibers. Consider the $16$ intersection points $A_i \cap B_j$, and let $p : X \to \mathbb{P}^1 \times \mathbb{P}^1$ be the blow up at these points.\\
The sum $\sum_i A_i + B_i$ belongs to $|{\cal O}_{\mathbb{P}^1 \times \mathbb{P}^1} (4, 4)| = - 2 K_{\mathbb{P}^1 \times \mathbb{P}^1}$. Each of the $8$ fibers $A_i, B_j$ contains $4$ base points, thus their strict transforms have all self intersection $- 4$. By the Remark \ref{easy}, $X$ is a Coble surface, and it admits an involution which is a lift of $(\mathbb{P}^1 \times \mathbb{P}^1, (a, b) \to (b, a))$. Of course this does not contradict the Lemma \ref{nothree}, since the boundary curve of $X$ has $8$ irreducible components.
\end{example}

\begin{example}
Let $C, Q \subset \mathbb{P}^2$ be smooth curves of degree $2, 4$ respectively. Assume that they are totally tangent, so that there are four points $q_1, \ldots, q_4$ such that $$Q \cap C = 2 q_1 + \cdots + 2 q_4.$$Let $$q : Y \to \mathbb{P}^2$$ be the double cover branched over $Q$. The surface $Y$ is a Del Pezzo surface of degree $2$, and the cover involution $i$ is the Geiser involution. The preimage $q^* C$ splits as $$q^*C = A + B,$$ where $A, B$ are smooth rational curves which meet at $4$ points $p_1, \ldots, p_4$. These points are fixed by $i$, thus the blow up $p : X \to Y$ at these points admits a well defined involution $i_X : X \to X$. As usual, we show that $X$ is a Coble surface. Indeed, the canonical formula for $Y$ gives $$K_Y = q^* K_{\mathbb{P}^2} + \frac{1}{2} q^* Q = q^* (- L),$$ where $L$ is the class of a plane line. Thus $$A + B = q^* C = q^* (2 L) = - 2 K_Y.$$ If $E_1, \ldots, E_4 \subset X$ are the exceptional curves over $p_1, \ldots, p_4$, and $\tilde A, \tilde B \subset X$ are the strict transforms of $A, B$ respectively, then $$\tilde A + \tilde B = (p^* A - E_1 - \cdots - E_4) + (p^* B - E_1 - \cdots - E_4) =$$ $$= p^* (A + B) - 2 E_1 - \cdots - 2 E_4 = - 2 p^* K_Y - 2 E_1 \cdots - 2 E_4 = - 2 K_X.$$ Note that the equality $$q^*C = A + B$$ gives also $$(A + B)^2 = 2 C^2 = 8,$$ but $$(A + B)^2 = A^2 + B^2 + 2 A B = 2 A^2 + 8.$$ This implies $$A^2 = B^2 = 0$$ and hence $$\tilde A^2 = \tilde B^2 = -4.$$ By the Remark \ref{easy}, $X$ is a Coble surface, and it admits an involution which has the Geiser involution as minimal model. This clearly is not a contradiction with the Lemma \ref{nofive}, since the boundary curve of $X$ has $2$ irreducible components.
\end{example}

\begin{example}
Let $I : \mathbb{P}^2 \to \mathbb{P}^2$ be a projective involution, and let $p_0, L_0$ be the fixed point and the fixed line of $I$. Consider an irreducible plane cubic ${\cal E} \subset \mathbb{P}^2$, and assume that the node $q$ does not coincide with $p_0$ and it does not lie on $L_0$. Assume also that the intersection ${\cal E} \cap L_0$ is totally transverse. The curves $I ({\cal E})$ has the same properties, and it intersects ${\cal E}$ at $9$ points. Let $p : X \to \mathbb{P}^2$ be the blow up of $\mathbb{P}^2$ at the $11$ points ${\cal E} \cap I ({\cal E}), q, I (q).$ The surface $X$ is a Coble surface with $2$ boundary components, consisting of the strict transforms $\tilde {\cal E}, \tilde{I (\cal E)}$ of ${\cal E}, I (\cal E)$. Indeed, the blowing up of  $q, I (q)$ assures that the strict transforms of the two cubics are smooth rational curves. The $11$ points are double points for ${\cal E} + I (\cal E)$. Let $\Sigma$ be the set of these points: then $$\tilde {\cal E} + \tilde{I ({\cal E})} = q^* ({\cal E} + I ({\cal E})) - 2 \sum_{p \in \Sigma} E_p =$$ $$= q^* (6 L) - 2 \sum_{p \in \Sigma} E_p = - 2 K_X.$$ Since we blow up $9$ smooth points and a double point on each of the two cubics, the self intersection $\tilde {\cal E}^2$ equals $$\tilde {\cal E}^2 = {\cal E}^2 - 9 - 4 = - 4,$$ and the same holds for $\tilde {I ({\cal E})}$. The Remark \ref{easy} states that $X$ is a Coble surface, which is equipped with the lift of a projective involution. This does not contradict the Lemma \ref{nofour}, since the boundary curve of $X$ is reducible.
\end{example}

\phantomsection
\addcontentsline{toc}{section}{References}
\bibliographystyle{plain}


\end{document}